\providecommand{\U}[1]{\protect\rule{.1in}{.1in}}
\newtheorem{theorem}{Theorem}[section]
\newtheorem{proposition}[theorem]{Proposition}
\newtheorem{corollary}[theorem]{Corollary}
\newtheorem{remark}[theorem]{Remark}
\newtheorem{lemma}[theorem]{Lemma}
\def\aai{a_{\mathbf i}}
\def\hatik{\widehat{i_k}}
\def\lp1n{\ell_{p_1}^n}
\def\lpln{\ell_{p_l}^n}
\newtheorem*{DEFSP}{Theorem - Defant/Sevilla-Peris}
\newtheorem*{PP}{Theorem - Praciano-Pereira}
\numberwithin{equation}{section}
\begin{document}
\title[Sharp generalizations of the multilinear Bohnenblust--Hille inequality]{Sharp generalizations of the multilinear Bohnenblust--Hille inequality}
\date{}
\author[Albuquerque]{N. Albuquerque}
\address{Departamento de Matem\'{a}tica, \newline \indent
Universidade Federal da Para\'{i}ba, \newline \indent
58.051-900 - Jo\~{a}o Pessoa, Brazil.}
\email{ngalbqrq@gmail.com}

\author[Bayart]{F. Bayart}
\address{Laboratoire de Math\'ematique, \newline \indent
Universit\'e Blaise Pascal Campus des C\'ezeaux, \newline \indent
F-63177 Aubiere Cedex, France.}
\email{Frederic.Bayart@math.univ-bpclermont.fr}

\author[Pellegrino]{D. Pellegrino}
\address{Departamento de Matem\'{a}tica, \newline \indent
Universidade Federal da Para\'{i}ba, \newline \indent
58.051-900 - Jo\~{a}o Pessoa, Brazil.}
\email{dmpellegrino@gmail.com and pellegrino@pq.cnpq.br}

\author[Seoane]{J.B. Seoane-Sep\'{u}lveda}
\address{Departamento de An\'{a}lisis Matem\'{a}tico,\newline\indent Facultad de Ciencias Matem\'{a}ticas, \newline\indent Plaza de Ciencias 3, \newline\indent Universidad Complutense de Madrid,\newline\indent Madrid, 28040, Spain.}
\email{jseoane@mat.ucm.es}
\keywords{Bohnenblust--Hille inequality; Littlewood's $4/3$ inequality; Absolutely
summing operators}

\begin{abstract}
We prove that the multilinear Bohnenblust--Hille is a particular case of a
quite general family of optimal inequalities.

\end{abstract}
\maketitle


\section{Introduction}

The Bohnenblust-Hille inequality (see \cite{bh}) asserts that for all positive
integers $m\geq2$ there is a constant $C=C(m)\geq1$ such that
\begin{equation}
\left(  \sum\limits_{i_{1},...,i_{m}=1}^{\infty}\left\vert A(e_{i_{^{1}}%
},...,e_{i_{m}})\right\vert ^{\frac{2m}{m+1}}\right)  ^{\frac{m+1}{2m}}\leq
C\left\Vert A\right\Vert \label{juui}%
\end{equation}
for all continuous $m$-linear forms $A:c_{0}\times\cdots\times c_{0}%
\rightarrow\mathbb{K}$. Here, as usual, $\mathbb{K}$ denotes the fields of
real or complex scalars. This inequality has important applications in various
fields of analysis and mathematical physics (\cite{boas, monta}). The case
$m=2$ is the well-known Littlewood's $4/3$ inequality \cite{littlewood}.

In this paper we show that the Bohnenblust--Hille inequality is a very
particular case of a large family of sharp inequalities. More precisely, we
prove the following general result:

\begin{theorem}
\label{THMBHQ} Let $m\geq1$, let $q_{1},...,q_{m}\in\lbrack1,2].$ The
following assertions are equivalent:

(1) There is a constant $C_{q_{1}...q_{m}}\geq1$ such that{\small {
\[
\left(  {\textstyle\sum\limits_{i_{1}=1}^{\infty}}\left(  {\textstyle\sum
\limits_{i_{2}=1}^{\infty}}\left(  ...\left(  {\textstyle\sum\limits_{i_{m-1}%
=1}^{\infty}}\left(  {\textstyle\sum\limits_{i_{m}=1}^{\infty}}\left\vert
A\left(  e_{i_{1}},...,e_{i_{m}}\right)  \right\vert ^{q_{m}}\right)
^{\frac{q_{m-1}}{q_{m}}}\right)  ^{\frac{q_{m-2}}{q_{m-1}}}\cdots\right)
^{\frac{q_{2}}{q_{3}}}\right)  ^{\frac{q_{1}}{q_{2}}}\right)  ^{\frac{1}%
{q_{1}}}\leq C_{q_{1}\ldots q_{m}}\left\Vert A\right\Vert
\]
}}for all continuous $m$-linear forms $A:c_{0}\times\cdots\times
c_{0}\rightarrow\mathbb{K}$.

(2) $\frac{1}{q_{1}}+\cdots+\frac{1}{q_{m}}\leq\frac{m+1}{2}.$
\end{theorem}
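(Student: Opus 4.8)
The plan is to prove the equivalence by establishing both implications, with the ``hard'' direction $(2)\Rightarrow(1)$ resting on a mixed-norm interpolation argument bootstrapped from the classical Bohnenblust--Hille inequality \eqref{juui}.

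For the easy direction $(1)\Rightarrow(2)$, I would argue by contradiction using explicit extremal forms. The natural test objects are the $m$-linear forms built from Kahane--Salem--Zygmund type matrices: for each $n$ there is an $m$-linear form $A_n$ on $\linfn\times\cdots\times\linfn$ whose coefficients have modulus $1$ (or random signs) and whose norm is controlled by $n^{(m+1)/2}$ up to a constant, while each of the $n^m$ coefficients $A_n(e_{i_1},\dots,e_{i_m})$ has modulus comparable to $1$. Plugging such $A_n$ into the inequality in $(1)$, the left-hand side evaluates to $n^{1/q_1+\cdots+1/q_m}$ and the right-hand side is bounded by $C\,n^{(m+1)/2}$. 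Letting $n\to\infty$ forces $\frac{1}{q_1}+\cdots+\frac{1}{q_m}\le\frac{m+1}{2}$, which is exactly $(2)$. I expect this part to be routine once the Kahane--Salem--Zygmund estimate is invoked.

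For the main direction $(2)\Rightarrow(1)$, the plan is to interpolate between two elementary extreme cases. The first case is the classical Bohnenblust--Hille inequality \eqref{juui}, which is precisely the symmetric choice $q_1=\cdots=q_m=\frac{2m}{m+1}$, corresponding to $\frac{1}{q_1}+\cdots+\frac{1}{q_m}=\frac{m+1}{2}$ with equality. The second family of cases consists of the ``one-exponent-is-$1$'' inequalities: by Khinchin's inequality together with Hölder, one shows directly that if exactly one $q_j=1$ and the remaining exponents equal $2$ (so that $\frac1{q_1}+\cdots+\frac1{q_m}=1+\frac{m-1}{2}=\frac{m+1}{2}$), the corresponding mixed-norm estimate holds. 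More generally, the true building blocks are the inequalities where the exponents are a permutation of $(1,2,2,\dots,2)$ or sit on the boundary hyperplane. The key technical tool is a multiple mixed-$(\ell_p)$-space interpolation theorem (in the spirit of the Benedek--Panzone theory of mixed-norm spaces and the complex interpolation of vector-valued $\ell_q$ spaces): given that the inequality holds for two boundary exponent vectors $(q_1^{(0)},\dots,q_m^{(0)})$ and $(q_1^{(1)},\dots,q_m^{(1)})$, it holds for every $\theta\in[0,1]$ at the exponents defined coordinatewise by $\frac{1}{q_j^{(\theta)}}=\frac{1-\theta}{q_j^{(0)}}+\frac{\theta}{q_j^{(1)}}$. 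Since the set of admissible exponent vectors on the boundary hyperplane $\sum 1/q_j=\frac{m+1}{2}$ is the convex hull of finitely many such building-block vectors, iterated interpolation covers the entire boundary, and a trivial monotonicity argument (decreasing one $1/q_j$ only makes the left-hand norm smaller) then extends the result to the full region $\sum 1/q_j\le\frac{m+1}{2}$.

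The main obstacle will be setting up the interpolation correctly at the level of mixed-norm sequence spaces and verifying that the bilinear (here $m$-linear) form $A\mapsto$ (its coefficient array) interpolates with the right target spaces, since the iterated $\ell_{q_m}(\ell_{q_{m-1}}(\cdots))$ structure must be matched against the complex interpolation space $[\ell_{\mathbf q^{(0)}},\ell_{\mathbf q^{(1)}}]_\theta$. I would handle this by reducing to finitely many summands (so that all spaces are finite-dimensional and interpolation is unambiguous), applying the Stein--Weiss / Calderón complex interpolation theorem for analytic families of multilinear forms, and then passing to the supremum over the truncation. A secondary subtlety is ensuring the constant $C_{q_1\dots q_m}$ remains finite and independent of the dimension throughout the interpolation, which follows because the endpoint constants are dimension-free and interpolation preserves this.
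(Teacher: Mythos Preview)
Your outline is essentially the paper's argument: Kahane--Salem--Zygmund for $(1)\Rightarrow(2)$, and for $(2)\Rightarrow(1)$ take the permutations of $(1,2,\dots,2)$ as vertices, interpolate over the convex hull (which in reciprocal coordinates is exactly the hyperplane $\sum 1/q_j=\tfrac{m+1}{2}$ intersected with $[1,2]^m$), and finish by monotonicity. Two comments on the presentation are worth making.

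First, you frame the sufficiency as being ``bootstrapped from the classical Bohnenblust--Hille inequality \eqref{juui}'', but this is backwards relative to the paper and also unnecessary in your own scheme. The paper's point is precisely that BH is \emph{not} an input: the convex hull of the reciprocal vectors of the $(1,2,\dots,2)$ permutations already contains the diagonal point $(\tfrac{m+1}{2m},\dots,\tfrac{m+1}{2m})$, so \eqref{juui} falls out as a special case. You should drop BH from your list of building blocks.

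Second, there is a genuine (if small) step you glossed over. You write that ``by Khinchin's inequality together with H\"older, one shows directly that if exactly one $q_j=1$ and the remaining exponents equal $2$ \dots\ the corresponding mixed-norm estimate holds.'' Khinchin gives this directly only when the $\ell_1$-sum is the \emph{outermost} one, i.e.\ $(q_1,\dots,q_m)=(1,2,\dots,2)$. When the $1$ sits in an interior slot, say $\mathbf q(k)=(2,\dots,2,1,2,\dots,2)$, you need an additional ingredient: Minkowski's inequality in $\ell_{2}$ (used inductively on $k$) to push the $\ell_1$-sum outward and reduce to the $k=1$ case. The paper isolates this as a separate proposition; you should at least flag that the permuted vertices are obtained from the basic one via Minkowski, not from a fresh Khinchin argument.
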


The Bohnenblust--Hille inequality is just the particular case%
\[
q_{1}=\cdots=q_{m}=\frac{2m}{m+1}.
\]

This kind of inequalities was already considered in \cite{mona} when $m=2$, as
generalizations of Littlewood's 4/3 inequality. The strategy for the proof of
$(2)\implies(1)$ in Theorem \ref{THMBHQ} will be very simple, maybe simpler
than all previous known proofs of the Bohnenblust-Hille inequality. The
starting point is the generalized Littlewood mixed $(\ell^{1},\ell^{2})-$norm
inequality, that is that Theorem \ref{THMBHQ} is true when $(q_{1},\dots
,q_{m})=(1,2,\dots,2)$. This property is well-known and it is a consequence of
Khintchine's inequality. Using this and nothing else than Minkowski's
inequality and interpolation, we will deduce the general case.

Several generalizations of the Bohnenblust-Hill inequality have already been
obtained. Let us quote two of them. The first one is a vector-valued version
due to Defant and Sevilla-Peris in \cite{def1}.

\begin{DEFSP}
Let $m\geq1$, $1\leq s\leq q\leq2$. Define
\[
\rho=\frac{2m}{m+2\left(  \frac1s-\frac1q\right)  }.
\]
Then there exists a constant $C>0$ such that, for every continuous $m$-linear
mapping $A:c_{0}\times\dots\times c_{0}\to\ell_{s}$, then
\[
\left(  \sum_{i_{1},\dots,i_{m}=1}^{+\infty}\|A(e_{i_{1}},\dots,e_{i_{m}%
})\|^{\rho}_{q}\right)  ^{1/\rho}\leq C\|A\|.
\]

\end{DEFSP}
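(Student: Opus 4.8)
The plan is to isolate two endpoint inequalities that can be proved by hand and then fill in the whole admissible range by interpolation. Throughout write $a_{\mathbf i}=A(e_{i_1},\dots,e_{i_m})\in\ell_s$ and normalise $\|A\|\le 1$. For the first endpoint, where $q=s$ and hence $\rho=2$, I would use that $\ell_s$ has cotype $2$ for $1\le s\le 2$ and apply the cotype inequality once in each of the $m$ variables. With independent Rademacher sequences $r^{(1)},\dots,r^{(m)}$, iterated cotype gives
\[
\Big(\sum_{\mathbf i}\|a_{\mathbf i}\|_s^2\Big)^{1/2}\le C_2(\ell_s)^m\Big(\mathbb E\Big\|\sum_{\mathbf i}r^{(1)}_{i_1}\cdots r^{(m)}_{i_m}a_{\mathbf i}\Big\|_s^2\Big)^{1/2}=C_2(\ell_s)^m\Big(\mathbb E\big\|A(u^{(1)},\dots,u^{(m)})\big\|_s^2\Big)^{1/2},
\]
where $u^{(j)}=\sum_i r^{(j)}_i e_i$ satisfies $\|u^{(j)}\|_{c_0}=1$, so the right-hand side is $\le C_2(\ell_s)^m$. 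This settles the case $q=s$ for every $s\in[1,2]$.

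For the second endpoint, where $s=1$ and $q=2$ and hence $\rho=\frac{2m}{m+1}$, I would test against signs. For each $\varepsilon\in\{\pm1\}^{\mathbb N}$ the scalar $m$-linear form $A_\varepsilon(\cdot)=\langle\varepsilon,A(\cdot)\rangle$ obeys $\|A_\varepsilon\|\le\|\varepsilon\|_\infty\|A\|\le 1$, so the classical Bohnenblust--Hille inequality \eqref{juui} yields $\sum_{\mathbf i}|\langle\varepsilon,a_{\mathbf i}\rangle|^{2m/(m+1)}\le C^{2m/(m+1)}$. Averaging over $\varepsilon$ and invoking the lower Khintchine inequality (legitimate since $\frac{2m}{m+1}<2$) to replace $\mathbb E_\varepsilon|\langle\varepsilon,a_{\mathbf i}\rangle|^{2m/(m+1)}$ by a fixed multiple of $\|a_{\mathbf i}\|_2^{2m/(m+1)}$ gives $\big(\sum_{\mathbf i}\|a_{\mathbf i}\|_2^{2m/(m+1)}\big)^{(m+1)/2m}\le C'$.

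Next I would reach the full line $q=2$, $s\in[1,2]$, by keeping the measuring exponent fixed at $2$ and complex-interpolating the coefficient map $T\colon A\mapsto(a_{\mathbf i})_{\mathbf i}$, which is bounded $\mathcal L(^m c_0;\ell_1)\to\ell_{2m/(m+1)}(\ell_2)$ and $\mathcal L(^m c_0;\ell_2)\to\ell_2(\ell_2)$ by the two endpoints above (the second being the case $s=q=2$). Since $[\ell_1,\ell_2]_\alpha=\ell_s$ with $1-\alpha/2=1/s$ and, the inner space $\ell_2$ being fixed, $[\ell_{2m/(m+1)}(\ell_2),\ell_2(\ell_2)]_\alpha=\ell_{\rho_0}(\ell_2)$ with $\rho_0=\frac{2m}{m+2/s-1}$, this produces $\big(\sum_{\mathbf i}\|a_{\mathbf i}\|_2^{\rho_0}\big)^{1/\rho_0}\le C''$. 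Finally, for general $q\in[s,2]$, log-convexity of the $\ell_p$-norms gives the pointwise bound $\|a_{\mathbf i}\|_q\le\|a_{\mathbf i}\|_s^{1-\theta}\|a_{\mathbf i}\|_2^{\theta}$ with $\theta=\frac{1/s-1/q}{1/s-1/2}\in[0,1]$; inserting it and applying H\"older in $\mathbf i$ with $\|\cdot\|_s$ paired to the exponent $2$ and $\|\cdot\|_2$ paired to $\rho_0$, one checks that the compatibility condition $\frac1\rho=\frac{1-\theta}{2}+\frac{\theta}{\rho_0}$ holds exactly when $\rho=\frac{2m}{m+2(1/s-1/q)}$, and the two resulting factors are controlled by the first and third steps.

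The endpoint estimates and the closing H\"older bookkeeping are routine; the main obstacle is the interpolation step. One must justify that $T$ may legitimately be complex-interpolated, that is, that $\mathcal L(^m c_0;\ell_s)$ embeds with controlled norm into $[\mathcal L(^m c_0;\ell_1),\mathcal L(^m c_0;\ell_2)]_\alpha$. This is the ``reverse'' inclusion for spaces of multilinear forms, which is not automatic for general range spaces, and it is precisely here that the structure of $c_0$ (or a $C(K)$-space) as the domain is essential.
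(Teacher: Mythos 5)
Your two hand-proved endpoints are sound: the iterated cotype-$2$ argument does give the case $q=s$, $\rho=2$ (working with finite sections so that the Rademacher vectors $u^{(j)}$ stay in the unit ball of $c_0$), and averaging the scalar Bohnenblust--Hille inequality \eqref{juui} over signs via the lower Khintchine bound does give the case $s=1$, $q=2$, $\rho=\frac{2m}{m+1}$; the arithmetic of your closing H\"older step is also correct, since $\frac{1}{\rho}=\frac{1-\theta}{2}+\frac{\theta}{\rho_{0}}$ with $\theta=\frac{1/s-1/q}{1/s-1/2}$ and $\rho_{0}=\frac{2m}{m+2/s-1}$ indeed reproduces $\rho=\frac{2m}{m+2(1/s-1/q)}$. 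But the proof is incomplete exactly at the point you flag and then set aside. To interpolate the coefficient map $T$ between $\mathcal{L}({}^{m}c_{0};\ell_{1})\to\ell_{2m/(m+1)}(\ell_{2})$ and $\mathcal{L}({}^{m}c_{0};\ell_{2})\to\ell_{2}(\ell_{2})$, you must first place a given $A\in\mathcal{L}({}^{m}c_{0};\ell_{s})$, with controlled norm, inside $\left[\mathcal{L}({}^{m}c_{0};\ell_{1}),\mathcal{L}({}^{m}c_{0};\ell_{2})\right]_{\alpha}$. The canonical contractive inclusion runs the other way: interpolating the evaluation map only yields $\left[\mathcal{L}(Z;Y_{0}),\mathcal{L}(Z;Y_{1})\right]_{\alpha}\hookrightarrow\mathcal{L}(Z;[Y_{0},Y_{1}]_{\alpha})$, and the reverse inclusion fails for general couples. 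Establishing it for $Z=c_{0}$ (in effect a Kouba-type interpolation formula for the relevant tensor products with the couple $(\ell_{1},\ell_{2})$) is a nontrivial theorem in its own right, which you neither prove nor cite; the Calder\'on identity $[\ell_{p_{0}}(Y),\ell_{p_{1}}(Y)]_{\alpha}=\ell_{p}(Y)$ you invoke only handles the target side. Since your final step consumes precisely the interpolated estimate $\sum_{\mathbf{i}}\|a_{\mathbf{i}}\|_{2}^{\rho_{0}}\le C$ on the line $q=2$, the argument as written does not prove the theorem.

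For comparison, the paper's derivation of this statement (via Theorem \ref{THMMAIN} and its corollary, specialized to $p_{1}=\dots=p_{m}=\infty$) is engineered to avoid interpolating spaces of vector-valued forms altogether. The pair $(s,q)$ is fixed once and for all, so the domain $W=\mathcal{L}({}^{m}c_{0};\ell_{s})$ of $T$ and the inner norm $\|\cdot\|_{q}$ never move; the dependence on $(s,q)$ enters only through the Bennett--Carl fact that the inclusion $v:\ell_{s}\hookrightarrow\ell_{q}$ is $(r,1)$-summing with $\frac{1}{r}=\frac{1}{2}+\frac{1}{s}-\frac{1}{q}$. The extreme mixed $(\ell_{\lambda},\ell_{2})$-norm estimates, $\lambda=r$ here, are supplied in one position by Proposition \ref{PROPL1L2} (whose base case $\mathcal{P}_{0}$ is Lemma 2 of \cite{def1}), transported to every position by Minkowski's inequality (Proposition \ref{PROPMINKO}), and only then does interpolation occur --- entirely within the scale $\ell_{\mathbf{q}}(Y)$ with $Y=\ell_{q}$ fixed, where the formula from \cite{berg} is legitimately available. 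If you wish to keep your architecture, the cure is to prove the $q=2$ line directly rather than by interpolating operator spaces, e.g.\ by the summing-operator route (the inclusion $\ell_{s}\hookrightarrow\ell_{2}$ is $(s,1)$-summing); with that point established, your cotype endpoint and your H\"older bookkeeping do complete the proof.
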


Second, in the spirit of the Hardy-Littlewood generalization of Littlewood's
4/3 inequality, Praciano-Pereira has studied in \cite{PP80} the effect of
replacing $c_{0}$ by $\ell_{p}$ in the Bohnenblust-Hille inequality. For the
sake of convenience, let us introduce the following notations which will be
used throughout the paper: for $\mathbf{p}=(p_{1},\dots,p_{m})\in
[1,+\infty]^{m}$, let
\[
\left|  {\frac{1}{\mathbf{p}}}\right|  =\frac{1}{p_{1}}+\dots+\frac{1}{p_{m}%
}.
\]
For $p\geq1$, let also $X_{p}=\ell_{p}$ and let us define $X_{\infty}=c_{0}$.

\begin{PP}
Let $m\geq1$, $1\leq p_{1},\dots,p_{m}\leq+\infty$ with $\left|  {\frac
{1}{\mathbf{p}}}\right|  \leq\frac12$. Define
\[
\rho=\frac{2m}{m+1-2\left|  {\frac{1}{\mathbf{p}}}\right|  }.
\]
Then there exists a constant $C>0$ such that, for every continuous $m$-linear
mapping $A:X_{p_{1}}\times\dots\times X_{p_{m}}\to\mathbb{C}$,
\[
\left(  \sum_{i_{1},\dots,i_{m}=1}^{+\infty}|A(e_{i_{1}},\dots,e_{i_{m}%
})|^{\rho}\right)  ^{1/\rho}\leq C\|A\|.
\]

\end{PP}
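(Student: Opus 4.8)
The plan is to derive the inequality by complex interpolation between the two extreme values of $\delta:=\invp\in[0,\tfrac12]$. The guiding observation is the elementary identity
\[
\frac{1}{\rho}=\frac{m+1-2\delta}{2m}=(1-2\delta)\cdot\frac{m+1}{2m}+2\delta\cdot\frac{1}{2},
\]
which displays $1/\rho$ as a convex combination, with weight $\theta=2\delta$, of the reciprocal $\frac{m+1}{2m}$ of the Bohnenblust--Hille exponent and of $\frac12$. Thus $\rho$ should appear by interpolating an inequality valid with no restriction on $\mathbf p$ (the case $\delta=0$) against one valid at the borderline $\delta=\frac12$ (where the exponent is $2$). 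After a routine reduction to finitely supported forms, it suffices to produce the two endpoints and to interpolate.

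Endpoint $\delta=0$ is immediate: then $p_1=\dots=p_m=\infty$, each $X_{p_k}=c_0$, and the desired estimate with exponent $\frac{2m}{m+1}$ is precisely Theorem~\ref{THMBHQ} with $q_1=\dots=q_m=\frac{2m}{m+1}$. Endpoint $\delta=\frac12$ asks, for every $\mathbf p$ with $\invp=\frac12$, for the bound $\left(\sum_{\mathbf i}|\aai|^2\right)^{1/2}\le C\|A\|$; this is the genuine analytic core. I would prove it by peeling one variable at a time: regarding $A$ as an $(m-1)$-linear map into $\ell_{p_m'}$ and using that, at this borderline, bounded operators between the relevant $\ell_p$-spaces are automatically Hilbert--Schmidt (Kwapie\'{n}'s theorem), together with the vector-valued Defant--Sevilla-Peris inequality to absorb the remaining variables. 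It is exactly the budget $\invp=\frac12$ that makes these borderline Hilbert--Schmidt estimates available.

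For a general $\mathbf p$ with $\delta\in(0,\frac12)$, put $\theta=2\delta$ and define the borderline tuple $\mathbf p^{B}$ by $\frac{1}{p_k^{B}}=\frac{1}{2\delta\,p_k}$, so that $\left|\frac{1}{\mathbf p^{B}}\right|=\frac12$ and $\frac{1}{p_k}=(1-\theta)\cdot0+\theta\cdot\frac{1}{p_k^{B}}$. I would form an analytic family of $m$-linear forms on the strip $0\le\operatorname{Re}z\le1$ by rescaling the coefficients $\aai$ through $z$-dependent weights and simultaneously sliding the domains from $c_0^m$ to $\prod_k\ell_{p_k^{B}}$, arranged so that the left edge $\operatorname{Re}z=0$ reproduces Endpoint $\delta=0$ and the right edge $\operatorname{Re}z=1$ reproduces Endpoint $\delta=\frac12$. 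Applying the three-lines lemma to the scalar function $z\mapsto\sum_{\mathbf i}\aai\,\overline{b_{\mathbf i}}$, for $b$ in the unit ball of $\ell_{\rho'}$, and evaluating at $z=\theta$ gives the stated inequality with the value of $\rho$ read off from the identity above.

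The hardest part is Endpoint $\delta=\frac12$, together with the bookkeeping of the interpolation across varying domains. One cannot shortcut it by diagonally rescaling the $\ell_p$ variables to $c_0$ and applying the Bohnenblust--Hille inequality, because that reduction collapses every exponent to $\frac{2m}{m+1}$ before the $\ell_p$-structure is exploited and is provably too weak: already for the diagonal form $A(x,y)=\sum_i c_i x_i y_i$ on $\ell_{p_1}\times\ell_{p_2}$ it yields only an $\ell_4$-control of $(c_i)$ where an $\ell_2$-control is required. Preserving the $\ell_2$ (Khintchine) structure through the borderline endpoint, rather than degrading it to the single Bohnenblust--Hille exponent, is precisely what allows the interpolation to close and to reproduce the optimal $\rho$.
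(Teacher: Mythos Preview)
Your proposal has a genuine gap at the interpolation step. You want to interpolate between the Bohnenblust--Hille inequality on $c_0^m$ (your endpoint $\delta=0$) and an $\ell_2$ estimate on $\prod_k \ell_{p_k^{B}}$ (your endpoint $\delta=\tfrac12$) to conclude the desired bound on $\prod_k \ell_{p_k}$. The target side interpolates cleanly, since $\ell_\rho=[\ell_{2m/(m+1)},\ell_2]_\theta$. The obstruction is on the domain side: the linear map you are trying to interpolate is the coefficient map $A\mapsto\big(A(e_{i_1},\dots,e_{i_m})\big)_{\mathbf i}$, whose domain is a space of $m$-linear forms with the operator norm, and spaces of multilinear forms on varying $\ell_p$-domains do \emph{not} form a complex interpolation scale. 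Concretely, any three-lines argument on the line $\operatorname{Re}z=0$ must control $\|(a_{\mathbf i})\|_{2m/(m+1)}$, and the Bohnenblust--Hille inequality only yields $\|(a_{\mathbf i})\|_{2m/(m+1)}\le C\,\|A\|_{\mathcal L(c_0^m)}$; since $\|A\|_{\mathcal L(c_0^m)}\ge\|A\|_{\mathcal L(\prod_k\ell_{p_k})}$, this is the wrong direction. The ``$z$-dependent rescaling of the coefficients'' you allude to does not repair this: the multilinear operator norm is a supremum over product unit balls and does not admit the pointwise manipulation that drives the Riesz--Thorin argument for $\ell_p$-spaces. (Your displayed scalar function $z\mapsto\sum_{\mathbf i}a_{\mathbf i}\overline{b_{\mathbf i}}$ does not even depend on $z$ as written.)

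The endpoint $\delta=\tfrac12$ is also underspecified. The Defant--Sevilla-Peris theorem concerns maps out of $c_0^{m-1}$, so it applies directly only to the corner cases $p_k=2$, $p_j=\infty$ for $j\ne k$; passing from these corners to a general $\mathbf p^{B}$ with $\left|1/\mathbf p^{B}\right|=\tfrac12$ runs into the very same domain-interpolation obstacle. The paper avoids all of this by never interpolating the domain: it keeps $\prod_k X_{p_k}$ fixed and proves the mixed $(\ell_\lambda,\ell_2)$-norm estimate directly in Proposition~\ref{PROPL1L2}, by induction on the number of finite-$p$ factors (each step trades one $\ell_\infty^n$ for an $\ell_{p_l}^n$ via a H\"older/duality argument and optimisation over $x\in B_{\ell_{p_l}^n}$). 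Only afterwards does it interpolate, and only in the \emph{target} space $\ell_{\mathbf q}$, which is a genuine interpolation scale. Setting $s=1$, $q=2$ and $q_1=\dots=q_m=\rho$ in Theorem~\ref{THMMAIN} then gives the Praciano-Pereira statement.
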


Both approaches can be embedded into our framework, so that we are able to
prove the following result.

\begin{theorem}
\label{THMMAIN} Let $m\geq1$, let $1\leq s\leq q\leq2$, let $\mathbf{p}%
=(p_{1},\dots,p_{m})\in\lbrack1,+\infty]^{m}$ such that
\[
\frac{1}{s}-\frac{1}{q}-\left\vert {\frac{1}{\mathbf{p}}}\right\vert \geq0.
\]
Let also $q_{1},\dots,q_{m}\in\lbrack\lambda,2],$ where
\[
\lambda=\frac{1}{\frac{1}{2}+\frac{1}{s}-\frac{1}{q}-\left\vert {\frac
{1}{\mathbf{p}}}\right\vert }.
\]
Then the following are equivalent:

(1) There is a constant $C_{q_{1}...q_{m}}\geq1$ such that{\small {
\[
\left(  {\textstyle\sum\limits_{i_{1}=1}^{\infty}}\left(  {\textstyle\sum
\limits_{i_{2}=1}^{\infty}}\left(  ...\left(  {\textstyle\sum\limits_{i_{m-1}%
=1}^{\infty}}\left(  {\textstyle\sum\limits_{i_{m}=1}^{\infty}} \|A\left(
e_{i_{1}},...,e_{i_{m}}\right)  \|_{q} ^{q_{m}}\right)  ^{\frac{q_{m-1}}%
{q_{m}}}\right)  ^{\frac{q_{m-2}}{q_{m-1}}}\cdots\right)  ^{\frac{q_{2}}%
{q_{3}}}\right)  ^{\frac{q_{1}}{q_{2}}}\right)  ^{\frac{1}{q_{1}}}\leq
C_{q_{1}\ldots q_{m}}\left\Vert A\right\Vert
\]
}}for all continuous $m$-linear forms $A:X_{p_{1}}\times\cdots\times X_{p_{r}%
}\rightarrow\ell_{s}$.

(2) $\frac{1}{q_{1}}+\cdots+\frac{1}{q_{m}}\leq\frac{m}{2}+\frac
1s-\frac1q-\left|  {\frac{1}{\mathbf{p}}}\right|  .$
\end{theorem}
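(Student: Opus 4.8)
The plan is to concentrate on the substantive implication $(2)\Rightarrow(1)$; the converse $(1)\Rightarrow(2)$ is the sharpness statement and is obtained from a Kahane--Salem--Zygmund construction, discussed at the end. Throughout set $\delta=\frac1s-\frac1q-\invp\geq0$, so that $\frac1\lambda=\frac12+\delta$ and (2) reads $\sum_i\frac1{q_i}\leq\frac m2+\delta$, while the admissible range $q_i\in[\lambda,2]$ becomes $\frac1{q_i}\in[\frac12,\frac12+\delta]$. \textbf{First reduction (to the boundary).} Because an iterated mixed $\ell_{q_i}$-norm can only grow when one of its exponents is decreased, it is enough to prove (1) on the boundary $\sum_i\frac1{q_i}=\frac m2+\delta$: starting from any admissible $(q_i)$ I lower the exponents, keeping them $\geq\lambda$, until equality holds, which is always possible since the largest attainable value of the sum is $m/\lambda=\frac m2+m\delta\geq\frac m2+\delta$ for $m\geq1$; the resulting boundary estimate then dominates the original left-hand side.

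\textbf{Second reduction (to the vertices).} The set $\{(\tfrac1{q_i})_i\in[\tfrac12,\tfrac12+\delta]^m:\sum_i\tfrac1{q_i}=\tfrac m2+\delta\}$ is a simplex whose extreme points are the $m$ vectors obtained by putting $q_k=\lambda$ for a single index $k$ and $q_j=2$ for $j\neq k$ (when $\delta=0$ it degenerates to the single point $q_1=\dots=q_m=2$). Every boundary point is a convex combination of these vertices, so the generalized Hölder inequality for mixed norms — exactly the ``interpolation'' step used in Theorem \ref{THMBHQ} — reduces the matter to proving (1) at each of the $m$ vertices.

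\textbf{The vertex inequalities (the crux).} Fix $k$; it suffices to establish
\[
\Big(\sum_{i_k}\Big(\sum_{\hatik}\|A(e_{i_1},\dots,e_{i_m})\|_q^2\Big)^{\lambda/2}\Big)^{1/\lambda}\leq C\|A\|,
\]
the correct nesting of the iterated norm being recovered afterwards through Minkowski's inequality, which is licit precisely because $\lambda\leq q_j=2$. I would apply the vector-valued Khintchine--Kahane inequality in the $m-1$ variables $j\neq k$ — legitimate since $\ell_q$ has cotype $2$ for $q\leq2$ — to replace the inner $\ell_2$-sum by an $L_1$-average over Rademacher signs of $\|A(\cdots)\|_q$. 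The $\ell_{p_j}$-domains are then absorbed, one variable at a time, by Hölder's inequality, which produces the quantity $\invp$; and the passage from the $\ell_s$-valued quantity to its $\ell_q$-norm is controlled by the cotype/summability mechanism behind the Defant--Sevilla-Peris theorem, which produces the gain $\frac1s-\frac1q$. The combination of these two gains is exactly what pins down the sharp endpoint $\frac1\lambda=\frac12+\frac1s-\frac1q-\invp$.

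\textbf{Sharpness and main obstacle.} For $(1)\Rightarrow(2)$ I would test the inequality on $m$-linear forms with unimodular coefficients whose norm on $X_{p_1}\times\cdots\times X_{p_m}$ into $\ell_s$ is controlled by a Kahane--Salem--Zygmund estimate adapted to the $\ell_{p_j}$-domains and the $\ell_s$-range; letting the number of variables tend to infinity forces $\sum_i\frac1{q_i}\leq\frac m2+\frac1s-\frac1q-\invp$. The genuinely delicate point is the vertex inequality: obtaining the optimal $\lambda$ requires fusing three distinct mechanisms — Khintchine--Kahane for the $\ell_2$-directions, Hölder for the $\ell_p$-domains, and the cotype argument for the $\ell_s\to\ell_q$ passage — into a single estimate, and then transporting the distinguished exponent $\lambda$ into its prescribed slot in the iterated norm via Minkowski's inequality.
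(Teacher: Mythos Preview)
Your plan is correct and mirrors the paper's proof: reduction to the boundary by monotonicity, interpolation from the $m$ vertices $\mathbf q(k)=(2,\dots,\lambda,\dots,2)$, Minkowski to move the distinguished exponent $\lambda$ into the $k$-th slot, and a Kahane--Salem--Zygmund construction for sharpness. The only places where the paper is more explicit than your sketch are that the ``cotype/summability mechanism'' is precisely the Bennett--Carl theorem (the inclusion $\ell_s\hookrightarrow\ell_q$ is $(r,1)$-summing with $\tfrac1r=\tfrac12+\tfrac1s-\tfrac1q$), and the vertex inequality is obtained by an induction on the number of coordinates upgraded from $\ell_\infty$ to $\ell_{p_j}$, whose harder case (when the distinguished index $k$ lies among the not-yet-upgraded coordinates) requires a genuine H\"older/duality computation rather than a one-line absorption.
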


When all the $q_{i}$ are equal, we get the following corollary:

\begin{corollary}
Let $m\geq1$, let $1\leq s\leq q\leq2$, let $\mathbf{p}=(p_{1},\dots,p_{m}%
)\in[1,+\infty]^{m}$ such that
\[
\frac1s-\frac1q-\left|  {\frac{1}{\mathbf{p}}}\right|  \geq0.
\]
Let us define
\[
\rho=\frac{2m}{m+2\left(  \frac1s-\frac1q-\left|  {\frac{1}{\mathbf{p}}%
}\right|  \right)  }.
\]
Then there exists a constant $C>0$ such that, for every continuous $m$-linear
mapping $A:X_{p_{1}}\times\dots\times X_{p_{m}}\to\ell_{s}$, then
\[
\left(  \sum_{i_{1},\dots,i_{m}=1}^{+\infty}\|A(e_{i_{1}},\dots,e_{i_{m}%
})\|^{\rho}_{q}\right)  ^{1/\rho}\leq C\|A\|.
\]

\end{corollary}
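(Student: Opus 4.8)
The plan is to obtain this corollary as the special case of Theorem \ref{THMMAIN} in which all the exponents coincide. The key observation is that when $q_1=\cdots=q_m=\rho$ the iterated mixed norm appearing in assertion (1) of Theorem \ref{THMMAIN} collapses: each inner exponent ratio $q_{k-1}/q_k$ equals $1$, so the nested sums flatten into the single sum $\big(\sum_{i_1,\dots,i_m}\|A(e_{i_1},\dots,e_{i_m})\|_q^\rho\big)^{1/\rho}$, which is exactly the quantity in the statement. Thus it suffices to verify that the constant choice $q_1=\cdots=q_m=\rho$ is admissible for the theorem, i.e.\ that $\rho\in[\lambda,2]$ and that condition (2) holds.

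To streamline the verification I would set $\alpha=\frac1s-\frac1q-\left|\frac{1}{\mathbf p}\right|$, which is nonnegative by hypothesis, so that $\rho=\frac{2m}{m+2\alpha}$ and $\lambda=\frac{2}{1+2\alpha}$. First, for condition (2): with all exponents equal to $\rho$ the left-hand side is $\sum_{k}1/q_k = m/\rho$, and a direct computation gives $m/\rho=\frac{m+2\alpha}{2}=\frac m2+\alpha$, which is precisely the right-hand side of (2). Hence (2) holds, in fact with equality.

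It remains to check the range constraint $\rho\in[\lambda,2]$, and this is where the (entirely routine) algebra lives. The upper bound $\rho\le 2$ is equivalent to $m\le m+2\alpha$, i.e.\ to $\alpha\ge0$, which holds by assumption. The lower bound $\rho\ge\lambda$ unwinds to $m(1+2\alpha)\ge m+2\alpha$, i.e.\ to $2\alpha(m-1)\ge0$, which again holds since $\alpha\ge0$ and $m\ge1$. With both inequalities confirmed, the hypotheses of Theorem \ref{THMMAIN} are met, assertion (1) applies, and the collapsed mixed norm yields exactly the asserted estimate. I do not expect any genuine obstacle here: the entire content is the elementary verification that the symmetric point $(\rho,\dots,\rho)$ lies inside the admissible region $[\lambda,2]^m$ and saturates the summability condition (2).
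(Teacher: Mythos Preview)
Your proposal is correct and follows exactly the approach the paper intends: the paper introduces the corollary with the sentence ``When all the $q_i$ are equal, we get the following corollary,'' and you have simply written out the (routine) verification that the choice $q_1=\cdots=q_m=\rho$ satisfies the hypotheses of Theorem~\ref{THMMAIN}. Your checks that $\rho\in[\lambda,2]$ and that condition~(2) holds with equality are accurate.
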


It is plain that this corollary extends Defant and Sevilla-Peris result to the
$\ell_{p}$-case (and we get the same result if we choose $p_{1}=\dots
=p_{m}=\infty$). To show that Theorem \ref{THMMAIN} also implies Theorem
\ref{THMBHQ} and the result of Praciano-Pereira, it suffices to choose $q=2$,
$s=1$ and to consider only $m$-linear mappings which have their range in the
span of the first basis vector.

\section{A general interpolation method}

To prove our main theorem, we shall prove several particular cases and then
deduce the result by interpolation. To avoid boring technicalities, let us
introduce the following notation: let $Y$ be a Banach space and let
$\mathbf{q}=(q_{1},\dots,q_{m})\in[1,+\infty)^{m}$. Then $\ell_{\mathbf{q}%
}(Y)$ denotes the Lorentz space $\ell_{q_{1}}(\ell_{q_{2}}(\dots(\ell_{q_{m}%
}(Y))\dots)).$

\begin{proposition}
Let $Z,Y$ be Banach spaces, let $N,m\geq1$ and, for any $k\in\{1,\dots,N\}$,
let $\mathbf{q}(k)\in[1,+\infty)^{m}$. Suppose that, for any $k\in
\{1,\dots,N\}$, $T$ is a bounded operator from $Z$ into $\ell_{q(k)}(Y)$ with
norm less than $C$. Then, for any $\mathbf{q}\in(1,+\infty)^{m}$ such that
$\left(  \frac1{q_{1}},\dots,\frac1{q_{m}}\right)  $ belongs to the convex
hull of $\left(  \frac{1}{q_{1}(k)},\dots,\frac1{q_{m}(k)}\right)  $,
$k=1,\dots,N$, $T$ is a bounded operator from $Z$ to $\ell_{\mathbf{q}}(Y)$
with norm less than $C$.
\end{proposition}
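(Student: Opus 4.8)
The plan is to prove this by a multilinear interpolation argument applied iteratively to the nested Lorentz (sequence) spaces. The key observation is that the spaces $\ell_{\mathbf{q}}(Y)$ depend on the exponent vector $\mathbf{q}$ through the reciprocals $1/q_j$, and the boundedness of $T$ into such a space is exactly the kind of estimate that behaves well under complex interpolation. Since any point of the convex hull of finitely many points can be reached by repeatedly taking pairwise convex combinations, it suffices to treat the case $N=2$; the general statement then follows by a straightforward induction on the number of vertices. So first I would reduce to showing: if $T:Z\to\ell_{\mathbf{q}(0)}(Y)$ and $T:Z\to\ell_{\mathbf{q}(1)}(Y)$ both with norm at most $C$, and $\mathbf{q}$ satisfies $\left(\frac{1}{q_1},\dots,\frac{1}{q_m}\right)=(1-\theta)\left(\frac{1}{q_1(0)},\dots,\frac{1}{q_m(0)}\right)+\theta\left(\frac{1}{q_1(1)},\dots,\frac{1}{q_m(1)}\right)$ for some $\theta\in[0,1]$, then $T:Z\to\ell_{\mathbf{q}}(Y)$ with norm at most $C$.

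For the two-point case the natural tool is the complex interpolation of vector-valued sequence spaces. The crucial identity is that the iterated space $\ell_{\mathbf{q}}(Y)$ is a complex interpolation space between $\ell_{\mathbf{q}(0)}(Y)$ and $\ell_{\mathbf{q}(1)}(Y)$: one expects
\[
\left[\ell_{\mathbf{q}(0)}(Y),\,\ell_{\mathbf{q}(1)}(Y)\right]_\theta = \ell_{\mathbf{q}}(Y)
\]
with equality of norms, where $\mathbf{q}$ is determined by the coordinatewise relation $\frac{1}{q_j}=\frac{1-\theta}{q_j(0)}+\frac{\theta}{q_j(1)}$. This is the vector-valued, iterated analogue of the classical formula $[\ell_{p_0},\ell_{p_1}]_\theta=\ell_p$. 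Granting this identity, the proposition is immediate: the operator $T:Z\to\ell_{\mathbf{q}(k)}(Y)$ has norm at most $C$ for $k=0,1$, and since $Z$ is trivially an interpolation space between itself and itself (with $[Z,Z]_\theta=Z$ isometrically), the standard interpolation theorem for the norm of operators gives $\|T:Z\to[\ell_{\mathbf{q}(0)}(Y),\ell_{\mathbf{q}(1)}(Y)]_\theta\|\leq C^{1-\theta}C^\theta=C$, which is the desired bound.

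The main obstacle, and the step I would spend the most care on, is establishing the interpolation identity for the iterated spaces $\ell_{q_1}(\ell_{q_2}(\cdots(\ell_{q_m}(Y))\cdots))$. For a single $\ell_p(Y)$ the complex interpolation formula is classical, but here one must verify that interpolation commutes with the nesting, i.e.\ that $[\ell_{q_1(0)}(Z_0),\ell_{q_1(1)}(Z_1)]_\theta=\ell_{q_1}([Z_0,Z_1]_\theta)$ when $Z_0,Z_1$ are themselves interpolation couples of the inner iterated spaces. This reduces, by induction on $m$, to the single-index case together with the fact that $\ell_q(\cdot)$ is a so-called exact interpolation functor compatible with the stability of complex interpolation under forming $\ell_q$-sums. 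I would carry this out by induction on $m$: the base case $m=1$ is the classical formula, and the inductive step applies the single-variable interpolation identity with the inner space $Y$ replaced by the iterated space $\ell_{(q_2,\dots,q_m)}(Y)$, using the inductive hypothesis to identify the interpolation of those inner spaces. One should note the mild technical point that the proposition requires $\mathbf{q}\in(1,+\infty)^m$ (strict inequality), which guarantees reflexivity-type conditions and ensures the interpolation couples are well behaved; the endpoint exponents $q_j(k)$ are only assumed in $[1,+\infty)$, which is exactly where a careful treatment of the Calderón complex method is needed.
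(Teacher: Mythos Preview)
Your approach is essentially the same as the paper's: both proceed by induction on $N$ (reducing a convex combination of $N$ vertices to a two-point interpolation between the $N$-th vertex and a convex combination of the first $N-1$), and both rest on the complex interpolation identity $[\ell_{\mathbf{p}}(Y),\ell_{\mathbf{q}}(Y)]_\theta=\ell_{\mathbf{r}}(Y)$ with $\tfrac{1}{r_i}=\tfrac{\theta}{p_i}+\tfrac{1-\theta}{q_i}$. The only difference is that the paper simply cites Bergh--L\"ofstr\"om for this identity, whereas you outline how to prove it by induction on $m$; the extra care you take with the endpoint exponents is harmless but not needed for the paper's purposes.
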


\begin{proof}
The proof is done by induction on $N$, using that, for any $\theta\in[0,1]$,
\[
[\ell_{\mathbf{p}}(Y),\ell_{\mathbf{q}}(Y)]_{\theta}=\ell_{\mathbf{r}}(Y)
\]
with $\frac{1}{r_{1}}=\frac{\theta}{p_{i}}+\frac{1-\theta}{q_{i}}$ (see
\cite{berg}). The inductive step follows by associativity. Indeed, if we may
write
\[
\frac{1}{q_{i}}=\frac{\theta_{1}}{q_{i}(1)}+\dots+\frac{\theta_{N}}{q_{i}(N)}%
\]
with $\theta_{1}+\dots+\theta_{N}=1$, then we also have
\[
\frac{1}{q_{i}}=\frac{1-\theta_{N}}{p_{i}}+\frac{\theta_{N}}{q_{i}(N)}%
\]
setting
\[
\frac1{p_{i}}=\frac{\alpha_{1}}{q_{i}(1)}+\dots+\frac{\alpha_{N-1}}%
{q_{i}(N-1)},\ \alpha_{i}=\frac{\theta_{i}}{1-\theta_{N}}%
\]
so that $\alpha_{1}+\dots+\alpha_{N-1}=1$.
\end{proof}

The previous proposition points out the necessity to describe certain convex hulls.

\begin{lemma}
Let $m\geq2$, $0<a<b$ and let, for any $k\in\{1,\dots,m\}$, $M_{k}%
=(a,\dots,a,b,a\dots,a)$ where $b$ is at the $k$-th position. Then the convex
hull of $M_{1},\dots,M_{m}$ is the set of points $(x_{1},\dots,x_{m})$ such
that $x_{1}+\dots+x_{m}=(m-1)a+b$ and $x_{k}\in[a,b]$ for any $k\geq1$.
\end{lemma}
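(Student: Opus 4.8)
The plan is to establish the two inclusions separately. Write $S$ for the candidate set, namely
\[
S=\Big\{(x_{1},\dots,x_{m})\ :\ x_{1}+\dots+x_{m}=(m-1)a+b\ \text{ and }\ x_{k}\in[a,b]\ \text{ for all }k\Big\}.
\]
First I would check that the convex hull is contained in $S$. Each vertex $M_{k}$ lies in $S$: its coordinates take only the values $a$ and $b$, all of which lie in $[a,b]$, and the sum of its entries is $(m-1)a+b$. Moreover $S$ is convex, being the intersection of the affine hyperplane $\{\sum_i x_{i}=(m-1)a+b\}$ with the box $[a,b]^{m}$. Since the convex hull of $M_{1},\dots,M_{m}$ is by definition the smallest convex set containing them, it is contained in $S$.

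The main point is the reverse inclusion $S\subseteq\mathrm{conv}(M_{1},\dots,M_{m})$. Given $x=(x_{1},\dots,x_{m})\in S$, I would exhibit explicit barycentric coordinates, the natural guess coming from reading off the coordinatewise identity: for any weights $\theta_{1},\dots,\theta_{m}$ one has, for each $j$,
\[
\Big(\sum_{k=1}^{m}\theta_{k}M_{k}\Big)_{j}=\theta_{j}b+a\sum_{k\neq j}\theta_{k}=a\Big(\sum_{k}\theta_{k}\Big)+(b-a)\theta_{j}.
\]
Setting $\theta_{j}=\dfrac{x_{j}-a}{b-a}$ (which is legitimate since $a<b$), the constraint $x_{j}\in[a,b]$ forces $\theta_{j}\in[0,1]$, and the constraint $\sum_{j}x_{j}=(m-1)a+b$ gives
\[
\sum_{j=1}^{m}\theta_{j}=\frac{\sum_{j}x_{j}-ma}{b-a}=\frac{(m-1)a+b-ma}{b-a}=1.
\]
With $\sum_{k}\theta_{k}=1$ the displayed identity above becomes $\big(\sum_{k}\theta_{k}M_{k}\big)_{j}=a+(b-a)\theta_{j}=x_{j}$ for every $j$, so $x=\sum_{k}\theta_{k}M_{k}$ is the desired convex combination.

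Since both inclusions will have been verified, the two sets coincide. I do not anticipate any serious obstacle: the only thing to watch is that the hypothesis $a<b$ guarantees $b-a\neq 0$, so that the candidate weights are well defined, and that the two defining conditions of $S$ are exactly what is needed to make those weights nonnegative and to make them sum to one.
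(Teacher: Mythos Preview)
Your argument is correct. The forward inclusion is immediate since $S$ is convex and contains each $M_{k}$; for the reverse inclusion your explicit choice of weights $\theta_{j}=(x_{j}-a)/(b-a)$ works exactly as you describe, with the two constraints defining $S$ ensuring respectively that $\theta_{j}\in[0,1]$ and that $\sum_{j}\theta_{j}=1$.

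The paper does not give this direct argument; it simply asserts that the lemma ``follows from an easy induction'' (presumably on $m$). Your approach is different in that it avoids induction altogether by writing down the barycentric coordinates in closed form. This has the modest advantage of being fully explicit and of making transparent why both defining conditions of $S$ are needed, whereas the inductive route would be shorter to state but would hide the formula for the weights. Either way the lemma is routine; your version is a perfectly acceptable, and arguably more informative, replacement for the paper's one-line proof.
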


\begin{proof}
This follows from an easy induction.
\end{proof}

As a consequence of these two results, we get the following corollary.

\begin{corollary}
Let $Z,Y$ be Banach spaces, let $m\geq2$ and let $1\leq\lambda\leq2$. For any
$k\in\{1,\dots,m\}$, let $\mathbf{q}(k)=(2,\dots,2,\lambda,2,\dots,2)$ where
$\lambda$ is at the $k$-th position. Suppose that $T$ maps continuously $Z$
into $\ell_{\mathbf{q}(k)}(Y)$ with norm less than $C$. Then, for any
$\mathbf{q}\in[\lambda,2]^{m}$ with
\[
\frac1{q_{1}}+\dots+\frac{1}{q_{m}}=\frac{m-1}2+\frac1{\lambda},
\]
$T$ maps continuously $Z$ into $\ell_{\mathbf{q}}(Y)$.
\end{corollary}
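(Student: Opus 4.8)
The plan is to recognize the hypotheses as a direct instance of the Lemma and then feed the resulting convex-hull membership into the Proposition. First I would set $a=\frac{1}{2}$ and $b=\frac{1}{\lambda}$; since $1\leq\lambda\leq2$ we have $a\leq b$. With this choice the reciprocal vector $\left(\frac{1}{q_1(k)},\dots,\frac{1}{q_m(k)}\right)$ attached to $\mathbf{q}(k)=(2,\dots,2,\lambda,2,\dots,2)$ is exactly the point $M_k=(a,\dots,a,b,a,\dots,a)$ of the Lemma, with $b$ in the $k$-th coordinate.

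Next I would translate the two constraints imposed on the target exponent $\mathbf{q}$. The condition $\mathbf{q}\in[\lambda,2]^m$ is equivalent to $\frac{1}{q_i}\in[a,b]$ for every $i$, while the normalization $\frac{1}{q_1}+\dots+\frac{1}{q_m}=\frac{m-1}{2}+\frac{1}{\lambda}$ is precisely $\sum_i\frac{1}{q_i}=(m-1)a+b$. Assuming first that $a<b$, that is $\lambda<2$, the Lemma then applies verbatim and shows that $\left(\frac{1}{q_1},\dots,\frac{1}{q_m}\right)$ belongs to the convex hull of $M_1,\dots,M_m$. Applying the Proposition with $N=m$ then yields at once that $T$ maps $Z$ continuously into $\ell_{\mathbf{q}}(Y)$.

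The only delicate point, and the step I would treat most carefully, is the hypothesis $\mathbf{q}\in(1,+\infty)^m$ required by the Proposition, which is not automatic when $\lambda=1$. I would dispose of it by a rigidity observation: if some coordinate satisfies $\frac{1}{q_j}=b$, then the normalization forces $\sum_{i\neq j}\frac{1}{q_i}=(m-1)a$, and since each remaining term is $\geq a$ they must all equal $a$; hence the only points of the constraint set carrying a coordinate equal to $b$ are the vertices $M_1,\dots,M_m$. Therefore any admissible $\mathbf{q}$ which is not one of the $\mathbf{q}(k)$ has $\frac{1}{q_i}<b\leq1$ for every $i$, so $\mathbf{q}\in(1,+\infty)^m$ and the Proposition applies; whereas if $\mathbf{q}$ coincides with some $\mathbf{q}(k)$, the desired boundedness is already part of the hypothesis. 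Finally, the degenerate case $\lambda=2$ collapses $a=b$, so the constraint set is the single point corresponding to $\mathbf{q}=(2,\dots,2)$ and there is nothing to prove. This exhausts all cases and completes the argument.
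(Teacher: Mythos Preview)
Your proposal is correct and follows exactly the route the paper intends: the Corollary is stated immediately after the Proposition and the Lemma with no proof beyond ``As a consequence of these two results,'' and your argument is precisely that combination. Your additional care with the boundary cases---observing that the Proposition requires $\mathbf q\in(1,+\infty)^m$ and disposing of the case $q_j=\lambda=1$ by the rigidity argument, as well as the degenerate case $\lambda=2$---fills in details the paper leaves implicit but does not deviate from its approach.
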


\section{Applications of the Minkowski's inequality}

In this section, we shall prove the following proposition, which shows that an
$(\ell_{\lambda},\ell_{2})$-mixed norm inequality implies many other
inequalities of this kind.

\begin{proposition}
\label{PROPMINKO} Let $Y$ be a Banach space, let $m\geq1$ and let $\lambda
\in[1,2]$. For any $\mathbf{i}\in\{1,\dots,m\}^{\mathbb{N}}$, let
$x_{\mathbf{i}}\in Y$. Define, for any $k\in\{1,\dots,m\}$, $\mathbf{q}%
(k)=(2,\dots,2,\lambda,2,\dots,2)$. Assume that, for any $\sigma$ a
permutation of $\{1,\dots,m\}$, $x_{\sigma(\mathbf{i})}\in\ell_{\mathbf{q}%
(1)}(Y)$. Then, for any $k\in\{1,\dots,m\}$ and for any permutation $\sigma$
of $\{1,\dots,m\}$, $x_{\sigma(\mathbf{i})}\in\ell_{\mathbf{q}(k)}(Y)$ with
\[
\|x_{\sigma(\mathbf{i})}\|\leq\max_{\tau}\|x_{\tau(\mathbf{i})}\|_{\ell
_{\mathbf{q}(1)}(Y)}.
\]

\end{proposition}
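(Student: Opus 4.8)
The plan is to control $\|x_{\sigma(\mathbf{i})}\|_{\ell_{\mathbf{q}(k)}(Y)}$ by transporting the single exponent $\lambda$ from the $k$-th slot to the outermost slot and then recognizing the result as one of the norms appearing on the right-hand side. The only analytic input needed is the generalized Minkowski inequality in the regime $\lambda\le 2$: for every array of nonnegative reals $(a_{s,t})$,
\[
\left({\textstyle\sum_t}\left({\textstyle\sum_s}a_{s,t}^{2}\right)^{\lambda/2}\right)^{1/\lambda}\le\left({\textstyle\sum_s}\left({\textstyle\sum_t}a_{s,t}^{\lambda}\right)^{2/\lambda}\right)^{1/2},
\]
that is, pulling the $\lambda$-summation to the outside (and pushing the $2$-summation inside) can only decrease the mixed norm.

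Fix a permutation $\sigma$ and an index $k$, and let $l=\sigma(k)$ be the coordinate that carries the exponent $\lambda$ in $\ell_{\mathbf{q}(k)}(Y)$. First I would record the elementary fact that any iterated norm all of whose exponents equal $2$ collapses to the plain $\ell_{2}$-norm over the product index and is therefore independent of the order of summation; applying this to the $m-k$ innermost sums lets me rewrite $\|x_{\sigma(\mathbf{i})}\|_{\ell_{\mathbf{q}(k)}(Y)}$ as an iterated norm in only $k$ variables, with exponents $(2,\dots,2,\lambda)$ and nonnegative entries. I would then move the coordinate $l$ one slot outward at a time. Each such step exchanges an inner sum carrying $\lambda$ with the adjacent outer sum carrying $2$: freezing the remaining (outer) summations, the displayed Minkowski inequality gives a pointwise decrease, and the outer iterated norm is a monotone lattice operation, so the decrease survives the outer nest. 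After $k-1$ steps the coordinate $l$ occupies the outermost slot with exponent $\lambda$ and every other coordinate carries exponent $2$, whence
\[
\|x_{\sigma(\mathbf{i})}\|_{\ell_{\mathbf{q}(k)}(Y)}\le N_{l},
\]
where $N_{l}$ denotes the norm obtained by summing coordinate $l$ outermost with exponent $\lambda$ and all remaining coordinates with exponent $2$.

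To finish, I would again invoke the order-independence of all-$2$ nests: the inner part of $N_{l}$ is just the $\ell_{2}$-norm over the other coordinates, so $N_{l}$ depends only on the choice of the singled-out coordinate and equals $\|x_{\tau(\mathbf{i})}\|_{\ell_{\mathbf{q}(1)}(Y)}$ for any permutation $\tau$ with $\tau(1)=l$. Hence $N_{l}\le\max_{\tau}\|x_{\tau(\mathbf{i})}\|_{\ell_{\mathbf{q}(1)}(Y)}$, and combining this with the previous display yields the asserted estimate; in particular the finiteness of the right-hand side (guaranteed by the hypothesis) forces $x_{\sigma(\mathbf{i})}\in\ell_{\mathbf{q}(k)}(Y)$.

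The step I expect to be the main obstacle is the middle one: justifying that applying the two-variable Minkowski inequality to a single adjacent pair of summation indices, with arbitrary further sums nested outside it, genuinely decreases the whole iterated norm. This is exactly where the monotonicity of the surrounding nest and the nonnegativity of the collapsed inner entries must be used, and it is the one point that has to be written out carefully rather than merely quoted.
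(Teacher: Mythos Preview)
Your overall strategy is the paper's: transport the $\lambda$ slot outward one transposition at a time using Minkowski's inequality, then recognize the result as an $\ell_{\mathbf q(1)}$-norm for a suitable permutation. The paper packages this as an induction on $k$, but the mechanism is identical, including the collapse of the inner all-$2$ block.

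There is, however, a genuine error: your displayed Minkowski inequality points the wrong way. For $\lambda\le 2$ the correct statement is
\[
\left(\sum_{s}\Bigl(\sum_{t}a_{s,t}^{\lambda}\Bigr)^{2/\lambda}\right)^{1/2}
\;\le\;
\left(\sum_{t}\Bigl(\sum_{s}a_{s,t}^{2}\Bigr)^{\lambda/2}\right)^{1/\lambda},
\]
i.e.\ pulling the $\lambda$-summation to the \emph{outside} can only \emph{increase} the mixed norm. (Check $\lambda=1$, $a=I_{2\times 2}$: your left-hand side equals $2$, your right-hand side equals $\sqrt 2$.) This reversed direction is precisely what the argument needs: $\|x_{\sigma(\mathbf i)}\|_{\ell_{\mathbf q(k)}(Y)}$ has $\lambda$ at the inner slot $k$, and each step bounds it \emph{from above} by the version with $\lambda$ one slot further out, ending at $N_l$. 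As you wrote it, the ``pointwise decrease'' would yield $N_l\le\|x_{\sigma(\mathbf i)}\|_{\ell_{\mathbf q(k)}(Y)}$, contradicting the conclusion you then state. Flip the inequality and replace ``decrease'' by ``increase'' throughout, and the proof is correct and coincides with the paper's.
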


Before to proceed with the proof, let us explain the notations. To say that
$x_{\sigma(\mathbf{i})}$ belongs to $\ell_{\mathbf{q}(1)}(Y)$ simply means
that
\[
\left(  {\sum\limits_{i_{\sigma(1)}=1}^{\infty}}\left(  {\sum
\limits_{i_{\sigma(2)}=1}^{\infty}}\left(  ...\left(  {\sum\limits_{i_{\sigma
(m-1)}=1}^{\infty}}\left(  {\sum\limits_{i_{\sigma(m)}=1}^{\infty}}\left\Vert
x_{\mathbf{i}}\right\Vert ^{2}\right)  ^{\frac{2}{2}}\right)  ^{\frac{2}{2}%
}\cdots\right)  ^{\frac{2}{2}}\right)  ^{\frac{\lambda}{2}}\right)  ^{\frac
{1}{\lambda}}<+\infty.
\]
Since there are many $2/2$ which simplify, we shall also denote the above
quantity
\[
\left(  \sum_{i_{k}=1}^{+\infty}\left(  \sum_{\widehat{i_{k}}}\Vert
x_{\mathbf{i}}\Vert^{2}\right)  ^{\lambda/2}\right)  ^{1/\lambda}<+\infty,
\]
$\widehat{i_{k}}$ meaning that the sum is over all coordinates except the
$k$-th coordinate.

\begin{proof}
We proceed by induction on $k$ and assume that the property is true until rank
$k-1$. Since the assumptions are invariant by permutation, we may assume
$\sigma=Id$. Then we just need to consider
\[
\alpha=\left(  {\sum\limits_{i_{1}=1}^{\infty}}\dots\sum\limits_{i_{k-1}%
=1}^{\infty}\left(  {\sum\limits_{i_{k}=1}^{\infty}}\left(  {\sum
\limits_{i_{k+1},\dots,i_{m}=1}^{\infty}}\left\Vert x_{\mathbf{i}}\right\Vert
^{2}\right)  ^{\frac{\lambda}{2}}\right)  ^{\frac{2}{\lambda}}\cdots\right)
^{\frac{1}{2}}.%
\]
The integers $i_{1},\dots,i_{k-1}$ being fixed, we set
\[
a_{i_{k-1},i_{k}}=\left(  {\sum\limits_{i_{k+1},\dots,i_{m}=1}^{\infty}%
}\left\Vert x_{\mathbf{i}}\right\Vert ^{2}\right)  ^{\frac{\lambda}{2}}.
\]
By Minkowski's inequality applied in $\ell_{2/\lambda}$,
\[
\sum_{i_{k-1}=1}^{+\infty}\left(  \sum_{i_{k}=1}^{+\infty}a_{i_{k-1},i_{k}%
}\right)  ^{2/\lambda}\leq\left(  \sum_{i_{k}=1}^{+\infty}\left(
\sum_{i_{k-1}=1}^{+\infty}a_{i_{k-1},i_{k}}^{2/\lambda}\right)  ^{\lambda
/2}\right)  ^{2/\lambda}.
\]
Hence,
\[
\alpha\leq\left(  {\sum\limits_{i_{1}=1}^{\infty}}\dots\sum\limits_{i_{k-2}%
=1}^{\infty}\left(  {\sum\limits_{i_{k}=1}^{\infty}}\left(  \sum_{i_{k-1}%
=1}^{+\infty}{\sum\limits_{i_{k+1},\dots,i_{m}=1}^{\infty}}\left\Vert
x_{\mathbf{i}}\right\Vert ^{2}\right)  ^{\frac{\lambda}{2}}\right)  ^{\frac
{2}{\lambda}}\cdots\right)  ^{\frac{1}{2}}.
\]
By the induction hypothesis, applied to the transposition $(k\quad k-1)$,
$\alpha\leq\max_{\tau}\Vert x_{\tau(\mathbf{i})}\Vert_{\ell_{\mathbf{q}(1)}(Y)}$.
\end{proof}

\section{A mixed $(\ell_{\lambda},\ell_{2})$-norm inequality}

We now give the main technical tool towards the proof of Theorem
\ref{THMMAIN}. It is a vector-valued version of the mixed $(\ell_{1},\ell
_{2})-$norm inequality of Littlewood, allowing moreover $\ell_{p}$-spaces
instead of $\ell_{\infty}$. If $Y$ is a Banach space of cotype 2, then
$C_{2}(Y)$ denotes its cotype 2 constant.

\begin{proposition}
\label{PROPL1L2} Let $X,Y$ be Banach spaces where $Y$ has cotype 2. Let
$m,n,r\geq1$, let $\mathbf{p}\in[1,+\infty]^{m}$, let $A:\lp1n\times
\dots\times\ell_{p_{m}}^{n}\to X$ be $m$-linear and let $v:X\to Y$ be an
$(r,1)$-summing operator. Let us assume that
\[
\left\{
\begin{array}
[c]{l}%
\displaystyle \frac1{p_{1}}+\dots+\frac1{p_{m}}<\frac1r\\
\text{For any }l\geq1,\ \sum_{k\neq l}\frac1{p_{k}}\leq\frac1r-\frac12.
\end{array}
\right.
\]
Let us set
\[
\lambda=\frac{1}{\frac1r-\left|  {\frac{1}{\mathbf{p}}}\right|  }.
\]
Then, for any $k\in\{1,\dots,m\}$,
\[
\left(  \sum_{i_{k}}\left(  \sum_{\widehat{i_{k}}}\|vA(e_{i_{1}}%
,\dots,e_{i_{m}}\|^{2}\right)  ^{\frac\lambda2}\right)  ^{\frac1\lambda}%
\leq\big(\sqrt2C_{2}(Y)\big)^{m-1}\pi_{r,1}(v)\|A\|.
\]

\end{proposition}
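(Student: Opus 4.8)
The plan is to argue by induction on $m$, the number of variables, using the $(r,1)$-summing property of $v$ to treat one distinguished slot and the cotype $2$ of $Y$ together with Khintchine's inequality to treat the remaining $m-1$ slots, each of the latter costing a factor $\sqrt2\,C_2(Y)$. Since the two hypotheses on $\mathbf p$ are symmetric in the coordinates other than a fixed one, it is enough to prove the estimate for an arbitrary fixed $k$, applying the summing operator to the $k$-th variable and the cotype/Khintchine machinery to the variables $j\neq k$.

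For the base case $m=1$ I would use a weighted form of the definition of an $(r,1)$-summing operator. For weights $\theta=(\theta_{i_1})$ with $\|\theta\|_{p_1}\le 1$ one has
\[
\Big(\sum_{i_1}\theta_{i_1}^{\,r}\,\|vA(e_{i_1})\|^{r}\Big)^{1/r}\le \pi_{r,1}(v)\,\sup_{\|\phi\|\le1}\sum_{i_1}\theta_{i_1}\,|\phi(A(e_{i_1}))|\le \pi_{r,1}(v)\,\|\theta\|_{p_1}\,\|A\|,
\]
the last step being H\"older against the unit ball of $(\ell_{p_1}^{n})^{*}=\ell_{p_1'}^{n}$. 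Taking the supremum over $\|\theta\|_{p_1}\le1$ converts the left-hand side, via the sharp (reverse) H\"older inequality, exactly into the $\ell_\lambda$-norm of $(vA(e_{i_1}))$ with $\frac1\lambda=\frac1r-\frac1{p_1}$; this gives the case $m=1$ with constant $\pi_{r,1}(v)$ and no $n$-dependence, and it fixes the template for the general case.

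For the inductive step I would peel off one inner variable $j\neq k$. Applying the cotype $2$ inequality of $Y$ in the $i_j$-direction replaces the inner expression $\big(\sum_{i_j}\|\cdot\|^2\big)^{1/2}$ by $C_2(Y)$ times a Rademacher average $\big(\mathbb E_t\|vA(\dots,w_j(t),\dots)\|^2\big)^{1/2}$, with $w_j(t)=\sum_{i_j}r_{i_j}(t)e_{i_j}$, after which Minkowski's inequality (legitimate because $\lambda\ge1$) reorganises the mixed norm. The crucial point is that one must \emph{not} estimate $\|w_j(t)\|_{p_j}=n^{1/p_j}$ directly, as this would destroy the $n$-independence. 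Instead, after passing to the summing operator on the $k$-th slot and pairing with a functional $\phi$, the Rademacher sum becomes a scalar sum $\sum_{i_j}r_{i_j}(t)\,\phi A(\dots,e_{i_j},\dots)$, to which the scalar Khintchine inequality applies, producing the factor $\sqrt2$ and an $\ell_2$-norm in $i_j$. Because the hypothesis $\sum_{l\neq k}\frac1{p_l}\le \frac1r-\frac12$ forces the relevant dual exponent to satisfy $p_j'\le2$, this $\ell_2$-norm is dominated by the $\ell_{p_j'}$-norm, which is precisely the dual norm seen by $\|A\|$. Thus each peeled variable contributes exactly $\sqrt2\,C_2(Y)$, lowers $\big|\tfrac1{\mathbf p}\big|$ by $\tfrac1{p_j}$ (so that $\lambda$ is shifted to the correct value of the $(m-1)$-variable problem), and leaves the remaining estimate in the exact form of the inductive hypothesis. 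After $m-1$ such steps one reaches the base case, yielding the constant $(\sqrt2\,C_2(Y))^{m-1}\pi_{r,1}(v)$.

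The main obstacle, and the place where the hypotheses are genuinely used, is exactly the $n$-independence: naively collapsing a Rademacher vector to its $\ell_{p_j}$-norm, or relaxing $\ell_\lambda$ to a cruder exponent by monotonicity, introduces factors $n^{1/p_j}$ that cannot be recovered later. The whole argument hinges on keeping the averaging and the weighted summing coupled, so that scalar Khintchine (constant $\sqrt2$) and a single H\"older step against the $\ell_{p_j'}$-ball — available precisely because $p_j'\le2$ — cancel the spurious powers of $n$ and land on the sharp exponent $\lambda$ rather than on the weaker $\big(\frac1r-\frac1{p_k}\big)^{-1}$. Verifying that this coupling survives all $m-1$ iterations, i.e. that at every stage the cumulative quantity $\frac1r-\sum(\text{peeled }1/p_l)$ stays $\ge\frac12$ — which is exactly what the two displayed conditions on $\mathbf p$ guarantee — is the delicate bookkeeping the proof must carry out.
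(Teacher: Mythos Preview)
Your base case $m=1$ is correct, and your weighted $(r,1)$-summing argument there is exactly the idea the paper uses in its inductive step. But the induction on $m$ that you propose does not close. The target inequality has a \emph{fixed} outer exponent $\lambda=\big(\tfrac1r-|\tfrac1{\mathbf p}|\big)^{-1}$ depending on \emph{all} $p_j$, whereas after peeling variable $j$ your $(m-1)$-variable hypothesis carries the different exponent $\lambda'=\big(\tfrac1r-|\tfrac1{\mathbf p}|+\tfrac1{p_j}\big)^{-1}$; saying that ``$\lambda$ is shifted'' does not explain how to pass from one to the other in the outer $\ell_\lambda$-sum over $i_k$. Concretely, once cotype~2 replaces $\big(\sum_{i_j}\|\cdot\|^2\big)^{1/2}$ by a Rademacher average, the $(m-1)$-linear form $A_t(\cdot)=A(\cdot,w_j(t),\cdot)$ satisfies only $\|A_t\|\le n^{1/p_j}\|A\|$; your proposed cure --- pair with a functional $\phi$ and use scalar Khintchine plus $p_j'\le 2$ --- would require the supremum over $\phi$ (coming from the $(r,1)$-summing on slot $k$) to sit \emph{inside} the $t$-average, but the $(r,1)$-summing bound is an inequality, not an identity, and you cannot reopen it after having applied the induction hypothesis. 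In short, the ``coupling'' you invoke is exactly the missing step, and without it you never reach the sharp exponent $\lambda$.

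The paper avoids this obstacle by a different induction: it fixes $m$ and inducts on $l$, the number of slots carrying a finite $p_j$, taking as base case $\mathcal P_0$ the all-$\ell_\infty$ inequality of Defant--Sevilla-Peris (where $\lambda=r$, the Rademacher vectors have $\ell_\infty$-norm $1$, and the whole cotype/Khintchine machinery lives). The inductive step upgrades one slot from $\ell_\infty$ to $\ell_{p_l}$ via your own weighting trick $z^{(l)}\mapsto xz^{(l)}$, $x\in B_{\ell_{p_l}^n}$: applying $\mathcal P_{l-1}$ with exponent $\lambda'$ and then optimising over $x$ converts $\lambda'$ into $\lambda$ by the duality $\tfrac1{\lambda}=\tfrac1{\lambda'}-\tfrac1{p_l}$. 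The hard case $k>l$ still requires a further double H\"older argument with carefully chosen exponents; this is where the second hypothesis on $\mathbf p$ is genuinely consumed, and it is not visible in your sketch.
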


\begin{proof}
For the sake of convenience, we shall denote $M=(\sqrt2C_{2}(Y))^{m-1}%
\pi_{r,1}(v)$ and $a_{\mathbf{i}}=\|vA(e_{i_{1}},\dots,e_{i_{m}})\|$. We shall
prove by induction on $l\in[0,m-1]$ the following statement $\mathcal{P}_{l}$:

For any $p_{1},\dots,p_{l}\in[1,+\infty]^{l}$ with $\frac1{p_{1}}+\dots
+\frac1{p_{l}}\leq\frac1r-\frac12$, for any $A:\lp1n\times\dots\times
\ell_{p_{l}}^{n}\times\ell_{\infty}^{n}\times\dots\times\ell_{\infty}^{n}\to
X$ $m$-linear, setting
\[
\lambda=\frac{1}{\frac1r-\left(  \frac1{p_{1}}+\dots+\frac1{p_{l}}\right)  },
\]
for any $k\in\{1,\dots,m\}$,
\begin{align}
\left(  \sum_{i_{k}}\left(  \sum_{\widehat{i_{k}}}a_{\mathbf{i}}^{2}\right)
^{\lambda/2}\right)  ^{1/\lambda}\leq M\|A\|. \label{EQ1}%
\end{align}

$\mathcal{P}_{0}$ is proved in \cite{def1}*{Lemma 2}. Let us assume that
$\mathcal{P}_{l-1}$ is true and let us prove $\mathcal{P}_{l}$. For $k\leq l$,
let us fix $x\in\ell_{p_{k}}^{n}$ and let us consider
\begin{align*}
B_{k}:\lp1n\times\dots\times\ell_{\infty}^{n}\times\dots\times\ell_{p_{l}}%
^{n}\times\ell_{\infty}^{n}\times\dots\times\ell_{\infty}^{n}  &
\rightarrow\mathbb{K}\\
(z^{(1)},\dots,z^{(k)},\dots,z^{(m)})  &  \mapsto A(z^{(1)},\dots
,xz^{(k)},\dots,z^{(m)}).
\end{align*}
By applying the induction hypothesis to $B_{k}$, we know that
\[
\left(  \sum_{i_{k}}|x_{i_{k}}|^{\lambda^{\prime}}\left(  \sum_{\widehat
{i_{k}}}a_{\mathbf{i}}^{2}\right)  ^{\lambda^{\prime}/2}\right)
^{1/\lambda^{\prime}}\leq M\Vert A\Vert\Vert x\Vert_{\ell_{p_{k}}^{n}}%
\]
where we have set
\[
\frac{1}{\lambda^{\prime}}=\frac{1}{r}-\sum_{\substack{j=1\\j\neq k}}^{l}%
\frac{1}{p_{j}}.
\]
We optimize this with respect to $x$ describing the unit ball of $\ell_{p_{k}%
}^{n}$. We get
\[
\left(  \sum_{i_{k}}\left(  \sum_{\widehat{i_{k}}}a_{\mathbf{i}}^{2}\right)
^{\lambda^{\prime}\left(  \frac{p_{k}}{\lambda^{\prime}}\right)  ^{\ast}%
}\right)  ^{\frac{1}{\lambda^{\prime}}\times\frac{1}{\left(  \frac{p_{k}%
}{\lambda^{\prime}}\right)  ^{\ast}}}\leq M\Vert A\Vert
\]
where $\left(  \frac{p_{k}}{\lambda^{\prime}}\right)  ^{\ast}$ denotes the
conjugate exponent of $\frac{p_{k}}{\lambda^{\prime}}$. Now,
\[
\frac{1}{\left(  \frac{p_{k}}{\lambda^{\prime}}\right)  ^{\ast}}%
=1-\frac{\lambda^{\prime}}{p_{k}}=\frac{\lambda^{\prime}}{\lambda}.
\]
This shows that (\ref{EQ1}) holds when $k\leq l$. Let us also show that it
also holds when $k>l$. The previous simple trick does not hold and the
induction hypothesis applied to $B_{l}$ and with
\[
\frac{1}{\lambda^{\prime}}=\frac{1}{r}-\sum_{j=1}^{l-1}\frac{1}{p_{j}}%
\]
now gives
\begin{eqnarray}\label{EQ2}
 \forall x\in B_{\lpln},\ \sum_{i_k}\left(\sum_{\hatik}\aai^2|x_{i_l}|^2\right)^{\lambda'/2}\leq M^{\lambda'}\|A\|^{\lambda'}.
\end{eqnarray}
Let us denote, for $i_{k}\in\{1,\dots,n\}$,
\[
S_{i_{k}}=\left(  \sum_{\widehat{i_{k}}}a_{\mathbf{i}}^{2}\right)  ^{1/2}.
\]
We intend to show that $\sum_{i_{k}}S_{i_{k}}^{\lambda}\leq M^{\lambda
}\left\Vert A\right\Vert ^{\lambda}$. We write
\begin{align*}
\sum_{i_{k}}S_{i_{k}}^{\lambda}  &  =\sum_{i_{k}}S_{i_{k}}^{\lambda-2}%
\sum_{\widehat{i_{k}}}a_{\mathbf{i}}^{2}\\
&  =\sum_{i_{l}}\sum_{\widehat{i_{l}}}\frac{a_{\mathbf{i}}^{2}}{S_{i_{k}%
}^{2-\lambda}}\\
&  =\sum_{i_{l}}\sum_{\widehat{i_{l}}}\frac{a_{\mathbf{i}}^{2/s}}{S_{i_{k}%
}^{2-\lambda}}a_{\mathbf{i}}^{2/s^{\ast}}%
\end{align*}
where $(s,s^{\ast})$ is a couple of conjugate exponents. We then apply
H\"{o}lder's inequality twice to get
\begin{align*}
\sum_{i_{k}}S_{i_{k}}^{\lambda}  &  \leq\sum_{i_{l}}\left(  \sum
_{\widehat{i_{l}}}\frac{a_{\mathbf{i}}^{2}}{S_{i_{k}}^{(2-\lambda)s}}\right)
^{\frac{1}{s}}\left(  \sum_{\widehat{i_{l}}}a_{\mathbf{i}}^{2}\right)
^{\frac{1}{s^{\ast}}}\\
&  \leq\left(  \sum_{i_{l}}\left(  \sum_{\widehat{i_{l}}}\frac{a_{\mathbf{i}%
}^{2}}{S_{i_{k}}^{(2-\lambda)s}}\right)  ^{\frac{t}{s}}\right)  ^{\frac{1}{t}%
}\left(  \sum_{i_{l}}\left(  \sum_{\widehat{i_{l}}}a_{\mathbf{i}}^{2}\right)
^{\frac{t^{\ast}}{s^{\ast}}}\right)  ^{\frac{1}{t^{\ast}}}.
\end{align*}
We then choose
\[
s=\frac{2-\lambda^{\prime}}{2-\lambda}\text{ and }\frac{t^{\ast}}{s^{\ast}%
}=\frac{\lambda}{2}%
\]
so that
\[
\frac{t}{s}=\frac{\lambda^{\prime}}{\lambda}.
\]
The inequality becomes
\begin{align*}
\sum_{i_{k}}S_{i_{k}}^{\lambda}  &  \leq\left(  \sum_{i_{l}}\left(
\sum_{\widehat{i_{l}}}\frac{a_{\mathbf{i}}^{2}}{S_{i_{k}}^{(2-\lambda^{\prime
})}}\right)  ^{\frac{\lambda}{\lambda^{\prime}}}\right)  ^{\frac{1}{t}}\left(
\sum_{i_{l}}\left(  \sum_{\widehat{i_{l}}}a_{\mathbf{i}}^{2}\right)
^{\frac{\lambda}{2}}\right)  ^{\frac{1}{t^{\ast}}}\\
&  \leq(M\Vert A\Vert)^{\frac{\lambda}{t^{\ast}}}\left(  \sum_{i_{l}}\left(
\sum_{\widehat{i_{l}}}\frac{a_{\mathbf{i}}^{2}}{S_{i_{k}}^{(2-\lambda^{\prime
})}}\right)  ^{\frac{\lambda}{\lambda^{\prime}}}\right)  ^{\frac{1}{t}}.
\end{align*}
It remains to control the last sum appearing on the right hand side of the
inequality. By (the converse of) H\"{o}lder's inequality, it is sufficient to
show that
\[
\sum_{i_{l}}\left(  \sum_{\widehat{i_{l}}}\frac{a_{\mathbf{i}}^{2}}{S_{i_{k}%
}^{(2-\lambda^{\prime})}}\right)  |y_{i_{l}}|\leq(M\Vert A\Vert)^{\lambda
^{\prime}}%
\]
for any $y\in B_{\ell_{\left(  \frac{\lambda}{\lambda^{\prime}}\right)
^{\ast}}^{n}}$. Since $\left(  \frac{\lambda}{\lambda^{\prime}}\right)
^{\ast}=\frac{p_{l}}{\lambda^{\prime}}$, this means that we want to prove
that
\[
\sum_{i_{l}}\sum_{\widehat{i_{l}}}\frac{a_{\mathbf{i}}^{2}}{S_{i_{k}%
}^{2-\lambda^{\prime}}}|x_{i_{l}}|^{\lambda^{\prime}}\leq(M\Vert
A\Vert)^{\lambda^{\prime}}%
\]
for any $x\in B_{\ell_{p_{l}}^{n}}$. Now,
\begin{align*}
\sum_{i_{l}}\sum_{\widehat{i_{l}}}\frac{a_{\mathbf{i}}^{2}}{S_{i_{k}%
}^{2-\lambda^{\prime}}}|x_{i_{l}}|^{\lambda^{\prime}}  &  =\sum_{i_{k}}%
\sum_{\widehat{i_{k}}}\frac{a_{\mathbf{i}}^{2}}{S_{i_{k}}^{2-\lambda^{\prime}%
}}|x_{i_{l}}|^{\lambda^{\prime}}\\
&  \leq\sum_{i_{k}}\left(  \sum_{\widehat{i_{k}}}\frac{a_{\mathbf{i}}^{2}%
}{S_{i_{k}}^{2}}\right)  ^{\frac{2-\lambda^{\prime}}{2}}\left(  \sum
_{\widehat{i_{k}}}a_{\mathbf{i}}^{2}|x_{i_{l}}|^{2}\right)  ^{\frac
{\lambda^{\prime}}{2}}%
\end{align*}
by H\"{o}lder's inequality with $\frac{2-\lambda^{\prime}}{2}+\frac
{\lambda^{\prime}}{2}=1$. To conclude, it remains to observe that
\[
\sum_{\widehat{i_{k}}}\frac{a_{\mathbf{i}}^{2}}{S_{i_{k}}^{2}}=\frac
{\sum_{\widehat{i_{k}}}a_{\mathbf{i}}^{2}}{S_{i_{k}}^{2}}=1
\]
and to use (\ref{EQ2}).

\medskip

It remains to deduce the proposition from $\mathcal{P}_{m-1}$. The argument is
exactly the same as we deduced $\mathcal{P}_{l}$ from $\mathcal{P}_{l-1}$,
except that we do not need the second (and more difficult) part, since there
are no $k>m$ in $\{1,\dots,m\}$. This explains why we just need
\[
\forall l\in\{1,\dots,m\},\ \sum_{k\neq l}\frac1{p_{k}}\leq\frac1r-\frac12
\]
and not
\[
\sum_{k=1}^{m} \frac1{p_{k}}\leq\frac1r-\frac12.
\]

\end{proof}

\begin{remark}
We cannot avoid the condition ``for any $l\geq1,\ \sum_{k\neq l}\frac1{p_{k}%
}\leq\frac1r-\frac12$'', see Section \ref{SECOPTIMALITYL2}.
\end{remark}

\section{First part of the proof of the main theorem}

We now prove that (2) implies (1) in Theorem \ref{THMMAIN}. Let $X=\ell_{s}$,
$Y=\ell_{q}$ and let $W$ be the Banach space of continuous $m$-linear forms
from $X_{p_{1}}\times\dots\times X_{p_{m}}$ to $\ell_{s}$. By the Bennett-Carl
inequalities (\cite{bennett, Carl}), the injection map $v:\ell_{s}%
\rightarrow\ell_{q}$ is $(r,1)$-summing with $\frac{1}{r}=\frac{1}{2}+\frac
{1}{s}-\frac{1}{q}.$ Then
\[
\lambda=\frac{1}{\frac{1}{2}+\frac{1}{s}-\frac{1}{q}-\left\vert {\frac
{1}{\mathbf{p}}}\right\vert }=\frac{1}{\frac{1}{r}-\left\vert {\frac
{1}{\mathbf{p}}}\right\vert }\leq2.
\]
Let finally $T$ defined on $W$ by $T(A)=(A(e_{i_{1}},\dots,e_{i_{m}}))$ and
$\mathbf{q}(k)=(2,\dots,2,\lambda,2,\dots,2)$. Applying Propositions
\ref{PROPL1L2} and \ref{PROPMINKO}, $T$ maps $W$ into $\ell_{\mathbf{q}(k)}$
for any $k\in\{1,\dots,m\}$. By our general interpolation procedure, $T$ maps
$X$ into $\ell_{\mathbf{q}}$ for any $\mathbf{q}=(q_{1},\dots,q_{m})\in
\lbrack\lambda,2]^{n}$ with
\[
\frac{1}{q_{1}}+\dots+\frac{1}{q_{m}}=\frac{m-1}{2}+\frac{1}{\lambda}=\frac
{m}{2}+\frac{1}{s}-\frac{1}{q}-\left\vert {\frac{1}{\mathbf{p}}}\right\vert .
\]

\begin{remark}
If we take care of the constant in the Bohnenblust-Hille inequality, our
method shows that it is valid with constant $C_{m}\leq\left(  \frac{2}%
{\sqrt{\pi}}\right)  ^{m-1}$ when $\mathbb{K}=\mathbb{C}$ or $C_{m}\leq
(\sqrt{2})^{m-1}$ when $\mathbb{K}=\mathbb{R}$. This constant comes from the
best known constant in the mixed $(\ell_{1},\ell_{2})$-Littlewood inequality.
In turn, this constant comes from the best constant in the Khintchine
$L^{1}-L^{2}$-inequality. However, we know that the best constant in the
Hille-Bohnenblust inequality is subpolynomial (see \cite{nun}). It would be
nice to know if the constant in the mixed $(\ell_{1},\ell_{2})$-Littlewood
inequality can also be chosen to be subpolynomial.
\end{remark}

\section{On the optimality}

\subsection{A Kahane-Salem-Zygmund inequality}

A way to prove that the exponent $\frac{2m}{m+1}$ is optimal in the
Bohnenblust-Hille inequality is to use the Kahane-Salem-Zygmund inequality,
which allows to control the infinite norm of random polynomials. We need two
variants of this inequality.

\begin{lemma}
\label{LEMKSZ} Let $d,n\geq1$, $p_{1},\dots,p_{d}\in[1,+\infty]^{d}$ and let,
for $p\geq1$,
\[
\alpha(p)=\left\{
\begin{array}
[c]{ll}%
\displaystyle \frac12-\frac1p & \text{ if }p\geq2\\
0 & \text{ otherwise.}%
\end{array}
\right.
\]
Then there exists a $d$-linear map $A:\lp1n\times\dots\ell_{p_{d}}^{n}%
\to\mathbb{C}$ of form
\[
A(z^{(1)},\dots,z^{(d)})=\sum_{i_{1},\dots,i_{d}}\pm z_{i_{1}}^{(1)}\cdots
z_{i_{d}}^{(d)}%
\]
such that
\[
\|A\|\leq C_{d} n^{\frac12+\alpha(p_{1})+\dots+\alpha(p_{d})}.
\]

\end{lemma}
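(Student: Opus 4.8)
The plan is to prove this Kahane--Salem--Zygmund type inequality by a probabilistic argument, choosing the signs $\pm$ at random and estimating the expected supremum norm of the resulting random multilinear form. Concretely, I would let $(\varepsilon_{i_1\dots i_d})$ be independent Rademacher (or standard Gaussian) variables and set
\[
A(z^{(1)},\dots,z^{(d)})=\sum_{i_1,\dots,i_d}\varepsilon_{i_1\dots i_d}z^{(1)}_{i_1}\cdots z^{(d)}_{i_d}.
\]
The goal is to show that with positive probability (hence for at least one choice of signs) the norm
\[
\|A\|=\sup\Big\{|A(z^{(1)},\dots,z^{(d)})| : \|z^{(j)}\|_{\ell_{p_j}^n}\le 1\Big\}
\]
is at most $C_d\, n^{1/2+\alpha(p_1)+\dots+\alpha(p_d)}$.

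The key steps, in order, are as follows. First I would fix the vectors $z^{(j)}$ and estimate $\mathbb{E}|A(z^{(1)},\dots,z^{(d)})|$ (or better, a suitable exponential moment) using Khintchine's inequality: since $A$ is, for fixed $z$, a Rademacher sum with coefficients $z^{(1)}_{i_1}\cdots z^{(d)}_{i_d}$, its $L^2$ norm is controlled by $\big(\sum_{\mathbf i}\prod_j |z^{(j)}_{i_j}|^2\big)^{1/2}=\prod_j\|z^{(j)}\|_{\ell_2^n}$. Second, I would convert the $\ell_{p_j}$ constraint into an $\ell_2$ bound: on the unit ball of $\ell_{p_j}^n$, the $\ell_2$ norm is at most $n^{\alpha(p_j)}$, which is exactly $n^{1/2-1/p_j}$ when $p_j\ge 2$ and $1$ when $p_j\le 2$ (this is where the definition of $\alpha$ enters). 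Thus the typical size of $A$ at a fixed point is $\prod_j n^{\alpha(p_j)}$. The missing factor $n^{1/2}$ will come from the supremum over the (many) directions.

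The third and main step is to pass from a pointwise (in $z$) estimate to a uniform bound over the unit balls, i.e.\ to control the supremum. Here I would use an $\varepsilon$-net argument: cover each unit ball of $\ell_{p_j}^n$ by a net of cardinality at most $(1+2/\varepsilon)^n$ (a standard volumetric bound, the same for every $p_j$ since it only depends on the real dimension $n$), take the product net over the $d$ factors, apply a union bound together with the exponential moment estimate from Khintchine, and finally pass from the net to the whole ball using the multilinearity and homogeneity of $A$. The union bound over roughly $C^{dn}$ net points forces an extra factor of order $\sqrt{n}$ (coming from $\sqrt{\log(\#\text{net})}\asymp\sqrt{dn}$ in the subgaussian tail), which produces precisely the $n^{1/2}$ appearing in the exponent; the constant $C_d$ absorbs the $d$-dependence.

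The hardest part will be the net/union-bound step: one must choose the net fineness $\varepsilon$ and organize the discretization-to-supremum transfer for a \emph{multilinear} (not linear) object, keeping track that the approximation error in each of the $d$ slots is reabsorbed without spoiling the exponent, and verifying that the Gaussian/Rademacher tail bound beats the net cardinality $e^{cdn}$ with only the advertised $\sqrt n$ loss. The moment computations in the first two steps are routine once the correct normalizations ($\alpha(p)$ and the $\ell_2$-vs-$\ell_p$ comparison) are in place.
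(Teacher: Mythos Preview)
Your proposal is correct and is precisely the standard Kahane--Salem--Zygmund argument (random signs, subgaussian tail via Khintchine, $\varepsilon$-net plus union bound) that the paper is invoking; the paper itself gives no proof and simply says the result follows from a straightforward modification of \cite{BOAS}*{Theorem~4}. Your second step---replacing $\|z^{(j)}\|_{\ell_2^n}\le n^{\alpha(p_j)}$ on the $\ell_{p_j}$-ball---is exactly the ``straightforward modification'' needed to pass from Boas's $\ell_\infty$ setting to general $\ell_{p_j}$.
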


The proof follows from a straightforward modification of \cite{BOAS}*{Theorem
4} and is therefore omitted.

We then also need a vector-valued version of this lemma.

\begin{lemma}
\label{LEMOPTI2} Let $d,n\geq1$, $s,p_{1},\dots,p_{d}\in[1,+\infty]^{d+1}$ and
let, for $p\geq1$,
\[
\alpha(p)=\left\{
\begin{array}
[c]{ll}%
\displaystyle \frac12-\frac1p & \text{ if }p\geq2\\
0 & \text{ otherwise.}%
\end{array}
\right.
\]
Then there exists a $d$-linear map $A:\lp1n\times\dots\ell_{p_{d}}^{n}%
\to\mathbb{\ell}_{p_{s}}^{n}$ of form
\[
A(z^{(1)},\dots,z^{(d)})=\sum_{i_{1},\dots,i_{d},i_{d+1}}\pm z_{i_{1}}%
^{(1)}\cdots z_{i_{d}}^{(d)}e_{i_{d+1}}%
\]
such that
\[
\|A\|\leq C_{d} n^{\frac12+\alpha(p_{1})+\dots+\alpha(p_{d})+\alpha(s^{*})}.
\]

\end{lemma}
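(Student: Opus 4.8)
The plan is to reduce this vector-valued statement to the scalar Kahane--Salem--Zygmund estimate of Lemma \ref{LEMKSZ} applied to $d+1$ factors, simply by dualizing the target space $\ell_s^n$. First I would record the sign coefficients as a family $(\varepsilon_{i_1\cdots i_d i_{d+1}})$ and observe that the $d$-linear map $A$ and the scalar $(d+1)$-linear form
\[
\widetilde A(z^{(1)},\dots,z^{(d)},w)=\sum_{i_1,\dots,i_d,i_{d+1}}\varepsilon_{i_1\cdots i_d i_{d+1}}\,z^{(1)}_{i_1}\cdots z^{(d)}_{i_d}\,w_{i_{d+1}}
\]
carry \emph{exactly} the same coefficients; here the extra variable $w\in\ell_{s^*}^n$ plays the role of the output coordinate $i_{d+1}$. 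In other words, the construction of $A$ and of $\widetilde A$ is one and the same, and it is enough to find a good sign pattern for $\widetilde A$.

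Next I would verify that $\|A\|=\|\widetilde A\|$, independently of the choice of signs. This is nothing more than the isometric duality $\ell_s^n=(\ell_{s^*}^n)^*$, valid in finite dimensions for every $s\in[1,+\infty]$. Indeed, for fixed $z^{(1)},\dots,z^{(d)}$,
\[
\big\|A(z^{(1)},\dots,z^{(d)})\big\|_{\ell_s^n}=\sup_{\|w\|_{\ell_{s^*}^n}\leq1}\big|\langle A(z^{(1)},\dots,z^{(d)}),w\rangle\big|,
\]
and the pairing on the right is precisely $\widetilde A(z^{(1)},\dots,z^{(d)},w)$. Taking the supremum simultaneously over the unit balls of $\ell_{p_1}^n,\dots,\ell_{p_d}^n$ and of $\ell_{s^*}^n$ yields $\|A\|=\|\widetilde A\|$.

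Finally I would invoke Lemma \ref{LEMKSZ} with $d+1$ linear factors and exponents $p_1,\dots,p_d,s^*$. That lemma supplies a choice of signs $(\varepsilon_{i_1\cdots i_{d+1}})$ for which
\[
\|\widetilde A\|\leq C_{d+1}\,n^{\frac12+\alpha(p_1)+\dots+\alpha(p_d)+\alpha(s^*)},
\]
since the scalar bound always carries a single $\tfrac12$ in the exponent together with one $\alpha$-term per factor. Using this very sign pattern to define $A$ and combining with $\|A\|=\|\widetilde A\|$ from the previous step gives the claimed estimate, after renaming the constant $C_{d+1}$ as $C_d$. There is essentially no hard step: the entire content is the duality identification, and once $A$ is recognized as a $(d+1)$-linear form both the construction and the exponent are inherited verbatim from the scalar case. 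The only point deserving a word of care is that the duality remains isometric at the endpoints $s=1$ and $s=\infty$, which it does in the finite-dimensional setting considered here.
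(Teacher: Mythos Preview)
Your proof is correct and follows exactly the same route as the paper: reduce to the scalar Kahane--Salem--Zygmund Lemma \ref{LEMKSZ} in $d+1$ variables via the isometric identification of $d$-linear maps into $\ell_s^n$ with $(d+1)$-linear forms having an extra $\ell_{s^*}^n$ factor. The paper's proof is just a terser statement of the same duality argument.
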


\begin{proof}
This lemma is an easy consequence of the previous lemma. Indeed, there is an
isometric correspondence between $d$-linear maps $\ell_{p_{1}}^{n}\times
\dots\times\ell_{p_{d}}^{n}\rightarrow\ell_{s}^{n}$ and $(d+1)$-linear maps
$\ell_{p_{1}}^{n}\times\dots\times\ell_{p_{d}}^{n}\times\ell_{s^{\ast}}%
^{n}\rightarrow\mathbb{C}$. The correspondence is given by
\[
\left(  z\mapsto\sum_{i_{1},\dots,i_{d+1}}a_{i_{1},\dots,i_{d+1}}z_{i_{1}%
}^{(1)}\cdots z_{i_{d}}^{(d)}e_{i_{d+1}}\right)  \mapsto\left(  z\mapsto
\sum_{i_{1},\dots,i_{d+1}}a_{i_{1},\dots,i_{d+1}}z_{i_{1}}^{(1)}\cdots
z_{i_{d+1}}^{(d+1)}\right)  .
\]

\end{proof}

\subsection{Optimality of Theorems \ref{THMBHQ} and \ref{THMMAIN}}

We just prove the optimality of Condition (2) in Theorem \ref{THMMAIN}. Thus,
let $q_{1},\dots,q_{m}\in[\lambda,2]$ satisfying $(1)$ of Theorem
\ref{THMMAIN}. Let $A:\lp1n\times\dots\times\ell_{p_{m}}^{n}\to\ell_{s}^{n}$
be given by Lemma \ref{LEMOPTI2}. Then
\[
\|A\|\leq C_{m} n^{\frac12+\frac{m+1}2-\frac1{p_{1}}-\dots-\frac1{p_{m}}%
-\frac1{s^{*}}}=C_{m} n^{\frac{m}2-\left|  {\frac{1}{\mathbf{p}}}\right|
+\frac1s}%
\]
whereas, for any $i_{1},\dots,i_{m}$,
\[
\|A(e_{i_{1}},\dots,e_{i_{m}})\|_{q}=n^{1/q}.
\]
Then,
\[
\left(  \sum_{i_{m}=1}^{n}\|A(e_{i_{1}},\dots,e_{i_{m}})\|_{q}^{q_{m}}\right)
^{\frac{q_{m-1}}{q_{m}}}=n^{\frac{q_{m-1}}{q_{m}}+\frac{q_{m-1}}q}.
\]
It is then easy to show by induction that
\[
\left(  {\textstyle\sum\limits_{j_{1}=1}^{\infty}}\left(  {\textstyle\sum
\limits_{j_{2}=1}^{\infty}}\left(  ...\left(  {\textstyle\sum\limits_{j_{m-1}%
=1}^{\infty}}\left(  {\textstyle\sum\limits_{j_{m}=1}^{\infty}} \|A\left(
e_{j_{1}},...,e_{j_{m}}\right)  \|_{q} ^{q_{m}}\right)  ^{\frac{q_{m-1}}%
{q_{m}}}\right)  ^{\frac{q_{m-2}}{q_{m-1}}}\cdots\right)  ^{\frac{q_{2}}%
{q_{3}}}\right)  ^{\frac{q_{1}}{q_{2}}}\right)  ^{\frac{1}{q_{1}}}%
=n^{\frac1q+\frac1{q_{1}}+\dots+\frac1{q_{m}}}.
\]
For $(1)$ of Theorem \ref{THMMAIN} to be true, it is necessary that
\[
\frac1q+\dots+\frac1{q_{m}}\leq\frac{m}2-\left|  {\frac{1}{\mathbf{p}}%
}\right|  +\frac1s.
\]

\subsection{The case $m=2$}

We now study more precisely the optimality in the Bohnenblust-Hille when $m=2$
and $\mathbb{K}=\mathbb{R}$.

\begin{theorem}
\label{67} Let $1 \leq p,q \leq2$ with $\frac{1}{p}+\frac{1}{q}\leq\frac{3}%
{2}.$ Then for every continuous bilinear form $A:c_{0}\times c_{0}%
\rightarrow{\mathbb{R}}$
\[
\left(  \sum_{k=1}^{\infty} \left(  \sum_{j=1}^{\infty} \left|  A\left(
e_{k},e_{j}\right)  \right|  ^{q} \right)  ^{p/q}\right)  ^{1/p}\leq
2^{\frac1p+\frac1q-1}\Vert A\Vert
\]
Moreover the constant $2^{\frac1p+\frac1q-1}$ is optimal.
\end{theorem}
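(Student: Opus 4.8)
The plan is to read the inequality as a mapping property of the operator $T:A\mapsto(A(e_k,e_j))$ from the space of bilinear forms into the mixed-norm space with outer exponent $p$ (index $k$) and inner exponent $q$ (index $j$), and to reach every admissible pair $(p,q)$ by interpolating three sharp endpoint estimates. Writing $a_{kj}=A(e_k,e_j)$, the three endpoints I would establish, all with the ordering ``outer $k$, inner $j$'', are the $(1,2)$-estimate $\sum_k(\sum_j a_{kj}^2)^{1/2}\le\sqrt2\,\|A\|$, the $(2,2)$-estimate $(\sum_{k,j}a_{kj}^2)^{1/2}\le\|A\|$, and the $(2,1)$-estimate $(\sum_k(\sum_j|a_{kj}|)^2)^{1/2}\le\sqrt2\,\|A\|$. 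In the plane of parameters $(1/p,1/q)$ these sit at $(1,\tfrac12)$, $(\tfrac12,\tfrac12)$ and $(\tfrac12,1)$, whose convex hull is exactly the admissible triangle $\{1/2\le 1/p,1/q\le 1,\ 1/p+1/q\le 3/2\}$.

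First I would prove the three endpoints. The $(1,2)$-estimate is the mixed $(\ell_1,\ell_2)$-inequality: for each fixed $k$, Khintchine's inequality in its sharp real $L^1$--$L^2$ form (constant $1/\sqrt2$) gives $(\sum_j a_{kj}^2)^{1/2}\le\sqrt2\,\mathbb E_\varepsilon|\sum_j\varepsilon_j a_{kj}|$; summing over $k$ and using that $\sum_k|A(e_k,\varepsilon)|=\|A(\cdot,\varepsilon)\|\le\|A\|$ for every (finitely supported) sign sequence yields the bound. The $(2,2)$-estimate is pure orthogonality: $\sum_{k,j}a_{kj}^2=\mathbb E_{\varepsilon,\delta}|A(\varepsilon,\delta)|^2\le\|A\|^2$. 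The $(2,1)$-estimate I would \emph{not} attack directly, since the inner $\ell_1$-norm does not linearise into a single evaluation of $A$; instead I would apply Minkowski's inequality in $\ell_2$ to pull the inner summation outside, $(\sum_k(\sum_j|a_{kj}|)^2)^{1/2}\le\sum_j(\sum_k a_{kj}^2)^{1/2}$, and then bound the right-hand side by $\sqrt2\,\|A\|$ via Khintchine in the $k$-variable. This is precisely the mechanism of Proposition \ref{PROPMINKO}.

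The interpolation step must be carried out by hand, which is essential because the scalars are real and complex interpolation is unavailable. The key lemma is that for a single $\theta\in[0,1]$ with $1/p=(1-\theta)/p_0+\theta/p_1$ and $1/q=(1-\theta)/q_0+\theta/q_1$ one has the constant-one bound
\[
\big\|(a_{kj})\big\|_{\ell_p(\ell_q)}\le\big\|(a_{kj})\big\|_{\ell_{p_0}(\ell_{q_0})}^{1-\theta}\big\|(a_{kj})\big\|_{\ell_{p_1}(\ell_{q_1})}^{\theta},
\]
proved by Littlewood's log-convexity of $\ell_q$-norms on the inner sum followed by Hölder on the outer sum. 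Applying this twice---first along an edge of the triangle, then towards the opposite vertex---reaches every admissible $(p,q)$. Because the endpoint constants are $\sqrt2,1,\sqrt2$, that is $2^{1/p+1/q-1}$ at each vertex, and because $1/p+1/q-1$ is affine in $(1/p,1/q)$ while the lemma produces the geometric mean of the endpoint constants, the resulting constant is exactly $2^{1/p+1/q-1}$; the hypothesis $1/p+1/q\le 3/2$ is precisely what guarantees that the two-step interpolation stays inside $[1,2]^2$.

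Finally, optimality follows from one explicit example rather than a Kahane--Salem--Zygmund argument, since we need the sharp \emph{constant} and not merely the exponent. I would take $A$ on $\mathbb R^2\times\mathbb R^2$ with $a_{11}=a_{12}=a_{21}=1$ and $a_{22}=-1$; then $\|A\|=2$, and since every $|a_{kj}|=1$ the left-hand side equals $2^{1/p+1/q}$, so the ratio is exactly $2^{1/p+1/q-1}$. I expect the only genuinely delicate point to be the constant bookkeeping: checking that the sharp real Khintchine constant $\sqrt2$ survives unchanged through the hand interpolation, which it does precisely because the interpolation inequality above has constant $1$ and the target exponent is log-affine.
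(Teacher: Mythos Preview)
Your argument is correct and essentially reaches the same conclusion as the paper, but the route differs in two respects.

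First, the paper does not interpolate from the three vertices $(1,2)$, $(2,2)$, $(2,1)$ directly; instead it quotes its own general theory to get the constant $\sqrt2$ along the entire critical segment $\tfrac1p+\tfrac1q=\tfrac32$, and then interpolates a single step between a carefully chosen point $(p_0,q_0)$ on that segment and the vertex $(2,2)$, using complex interpolation of the mixed spaces $\ell_{p}(\ell_{q})$. Your three-vertex approach is more self-contained: you prove the endpoints from Khintchine and Minkowski alone, so the proof stands without invoking the machinery of Sections~2--4. Conversely, the paper's proof makes visible that the result is a specialisation of the general $m$-linear framework.

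Second, your insistence that complex interpolation is ``unavailable'' over the reals is slightly overstated---the paper does invoke it, and for sequence spaces one can complexify without loss---but your replacement, the log-convexity inequality
\[
\|(a_{kj})\|_{\ell_p(\ell_q)}\le\|(a_{kj})\|_{\ell_{p_0}(\ell_{q_0})}^{1-\theta}\|(a_{kj})\|_{\ell_{p_1}(\ell_{q_1})}^{\theta},
\]
proved by H\"older on the inner and then the outer sum, is entirely correct and indeed more elementary. Since the endpoint constants $\sqrt2,1,\sqrt2$ coincide with $2^{1/p+1/q-1}$ at the vertices and this exponent is affine in $(1/p,1/q)$, the geometric mean gives exactly the claimed constant, just as in the paper.

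The optimality example is identical to the paper's.
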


\begin{proof}
If $\frac1p+\frac1q=\frac32$, it has already been observed that the inequality
holds true. Suppose now $1<p,q<2$ and
\[
\frac{1}{p}+\frac{1}{q}<\frac{3}{2}.
\]
Let%
\[
\theta_{0}:=2\left(  \frac{1}{p}+\frac{1}{q}-1\right)  .
\]
Note that $\theta_{0}\in\left(  0,1\right)  $ and let $p_{0},q_{0}$ be defined
by%
\begin{align*}
p_{0}  &  :=\frac{4p+4q-4pq}{2p+4q-3pq},\\
q_{0}  &  :=\frac{4p+4q-4pq}{4p+2q-3pq}.
\end{align*}

We thus have $1<p_{0},q_{0}<2$ and
\[
\frac{1}{p_{0}}+\frac{1}{q_{0}}=\frac{3}{2}.
\]
So, from Part 1 of the proof we have%
\[
\left(  \sum_{k=1}^{\infty}\left(  \sum_{j=1}^{\infty}|A(e_{k},e_{j})|^{q_{0}%
}\right)  ^{p_{0}/q_{0}}\right)  ^{1/p_{0}}\leq\sqrt{2}\Vert A\Vert
\]
for all continuous bilinear forms $A:c_{0}\times c_{0}\rightarrow{\mathbb{R}}%
$. On the other hand it is well known that%
\[
\left(  \sum_{k=1}^{\infty}\left(  \sum_{j=1}^{\infty}|A(e_{k},e_{j}%
)|^{2}\right)  \right)  ^{1/2}\leq\Vert A\Vert.
\]
Since%
\[
\left\{
\begin{array}
[c]{c}%
\frac{1}{p}=\frac{\theta_{0}}{p_{0}}+\frac{1-\theta_{0}}{2},\\
\frac{1}{q}=\frac{\theta_{0}}{q_{0}}+\frac{1-\theta_{0}}{2},
\end{array}
\right.
\]
from complex interpolation we have%
\begin{align*}
\left(  \sum_{k=1}^{\infty}\left(  \sum_{j=1}^{\infty}|A(e_{k},e_{j}%
)|^{q}\right)  ^{p/q}\right)  ^{1/p}  &  \leq\left(  \sqrt{2}\right)
^{\theta_{0}}\Vert A\Vert\\
&  =2^{\frac{1}{p}+\frac{1}{q}-1}\Vert A\Vert.
\end{align*}
The case $p,q\in\left\{  1,2\right\}  $ with $\frac{1}{p}+\frac{1}{q}<\frac
{3}{2}$ can be verified straightforwardly.

On the other hand, using ideas from \cite{diniz}, consider the bilinear form
\[
A(x,y)=x_{1}y_{1}+x_{1}y_{2}+x_{2}y_{1}-x_{2}y_{2}%
\]
which satisfies $\|A\|=2$. Then $A$ verifies the equality
\[
\left(  \sum_{k=1}^{2} \left(  \sum_{j=1}^{2} \left|  A\left(  e_{k}%
,e_{j}\right)  \right|  ^{q} \right)  ^{p/q}\right)  ^{1/p}\leq2^{\frac
1p+\frac1q-1}\Vert A\Vert
\]
which shows that the constant $2^{\frac1p+\frac1q-1}$ cannot be improved.
\end{proof}

\begin{remark}
In the complex case, we obtain that for any continuous bilinear form
$A:c_{0}\times c_{0}\to\mathbb{C}$,
\[
\left(  \sum_{k=1}^{\infty} \left(  \sum_{j=1}^{\infty} \left|  A\left(
e_{k},e_{j}\right)  \right|  ^{q} \right)  ^{p/q}\right)  ^{1/p}\leq\left(
\frac2{\sqrt\pi}\right)  ^{\frac1p+\frac1q-1}\Vert A\Vert.
\]
We have no information on the optimality of this estimate.
\end{remark}

\subsection{On the conditions of the mixed $(\ell_{\lambda},\ell_{2})$-norm
inequality}

\label{SECOPTIMALITYL2} We now show that we cannot avoid a condition like
\[
\forall l\in\{1,\dots,m\},\ \sum_{k\neq l}\frac{1}{p_{k}}\leq\frac{1}{r}%
-\frac{1}{2}%
\]
in the statement of Proposition \ref{PROPL1L2}. Indeed, let $X=Y=\mathbb{C}$
and $v=Id$, such that $r=1$. Assume on the contrary that $\sum_{k=1}%
^{m-1}\frac{1}{p_{k}}>\frac{1}{2}$. Let $F:\ell_{p_{1}}^{n}\times\dots
\times\ell_{p_{m-1}}^{n}\rightarrow\mathbb{C}$ be given by Lemma \ref{LEMKSZ},
so that $\Vert F\Vert\leq Cn^{\frac{m}{2}-\frac{1}{p_{1}}-\dots-\frac
{1}{p_{m-1}}}$. Let us define $A:\lp1n\times\dots\times\ell_{p_{m}}%
^{n}\rightarrow\mathbb{C}$ by
\[
A(z^{(1)},\dots,z^{(m)})=z_{1}^{(m)}F(z^{(1)},\dots,z^{(m-1)}),
\]
so that $\Vert A\Vert=\Vert F\Vert$. Then, for any $\theta>0$,
\[
\left(  \sum_{i_{m}}\left(  \sum_{\widehat{i_{m}}}|A(e_{i_{1}},\dots
,e_{i_{m-1}},e_{i_{m}})|^{2}\right)  ^{\theta/2}\right)  ^{1/\theta}%
\geq\left(  \sum_{\widehat{i_{m}}}|A(e_{i_{1}},\dots,e_{i_{m-1}},e_{i_{1}%
})|^{2}\right)  ^{1/2}.
\]
Now, we cannot have
\[
\left(  \sum_{\widehat{i_{m}}}|A(e_{i_{1}},\dots,e_{i_{m-1}},e_{i_{1}}%
)|^{2}\right)  ^{1/2}\leq C\Vert A\Vert
\]
since the left hand side is equal to $n^{\frac{m-1}{2}}$ and the right hand side
behaves like $n^{\frac{m-1}{2}+\frac{1}{2}-\frac{1}{p_{1}}-\dots-\frac
{1}{p_{m-1}}}$.

\bigskip

\noindent\textbf{Acknowledgements}. The authors would like to thank Prof.
Fernando Cobos for fruitful conversations and his interest in this note. Part
of this paper was written when the third named author was visiting Professors
C. Barroso and E. Teixeira at the Department of Mathematics of Universidade
Federal do Cear\'{a}, Brazil; he is very grateful for their hospitality.


\begin{bibdiv}
\begin{biblist}
\bib{berg}{book}{
author={Bergh, J.},
author={L{\"o}fstr{\"o}m, J.},
title={Interpolation spaces. An introduction},
note={Grundlehren der Mathematischen Wissenschaften, No. 223},
publisher={Springer-Verlag},
place={Berlin},
date={1976},
}
\bib{mona}{article}{
author={Blasco, O.},
author={Botelho, G.},
author={Pellegrino, D.},
author={Rueda, P.},
title={Summability of multilinear mappings: Littlewood, Orlicz and
beyond},
journal={Monatsh. Math.},
volume={163},
date={2011},
number={2},
pages={131--147},
}
\bib{boas}{article}{
author={Boas, Harold P.},
title={The football player and the infinite series},
journal={Notices Amer. Math. Soc.},
volume={44},
date={1997},
number={11},
pages={1430--1435},
}
\bib{BOAS}{article}{
author={Boas, H.P.},
title={Majorant series},
journal={J. Korean Math. Soc.},
volume={37},
year={2000},
pages={321--337}
}
		
\bib{bennett}{article}{
author={Bennett, G.},
title={Inclusion mappings between $l^{p}$ spaces},
journal={J. Functional Analysis},
volume={13},
date={1973},
pages={20--27}
}

\bib{bh}{article}{
author={Bohnenblust, H. F.},
author={Hille, Einar},
title={On the absolute convergence of Dirichlet series},
journal={Ann. of Math. (2)},
volume={32},
date={1931},
number={3},
pages={600--622},
}
\bib{Carl}{article}{
author={Carl, B.},
title={Absolut-$(p,\,1)$-summierende identische Operatoren von $l_{u}$
in $l_{v}$},
language={German},
journal={Math. Nachr.},
volume={63},
date={1974},
pages={353--360},
}
\bib{def1}{article}{
author={Defant, Andreas},
author={Sevilla-Peris, Pablo},
title={A new multilinear insight on Littlewood's 4/3-inequality},
journal={J. Funct. Anal.},
volume={256},
date={2009},
number={5},
pages={1642--1664},
}
	
\bib{DJT}{book}{
author={Diestel, Joe},
author={Jarchow, Hans},
author={Tonge, Andrew},
title={Absolutely summing operators},
series={Cambridge Studies in Advanced Mathematics},
volume={43},
publisher={Cambridge University Press},
place={Cambridge},
date={1995},
pages={xvi+474},
}
\bib{diniz}{article}{
author={Diniz, D.},
author={Mu\~{n}oz-Fern\'{a}ndez, G. A.},
author={Pellegrino, D.},
author={Seoane-Sep\'{u}lveda, J. B.},
title={Lower bounds for the constants in the Bohnenblust-Hille inequality: the case of real scalars},
journal={Proc. Amer. Math. Soc.},
status={accepted for publication},
}
\bib{Garling}{book}{
author={Garling, D. J. H.},
title={Inequalities: a journey into linear analysis},
publisher={Cambridge University Press},
place={Cambridge},
date={2007},
}
\bib{lp}{article}{
author={Lindenstrauss, J.},
author={Pe{\l}czy{\'n}ski, A.},
title={Absolutely summing operators in $L_{p}$-spaces and their
applications},
journal={Studia Math.},
volume={29},
date={1968},
pages={275--326},
issn={0039-3223},
}
		
\bib{littlewood}{article}{
author={Littlewood, J. E.},
title={On bounded bilinear forms in an infinite number of variables},
journal={Quart. J. Math. Oxford},
volume={1},
date={1930},
pages={164--174},
}
\bib{monta}{article}{
author={Montanaro, A.},
title={Some applications of hypercontractive inequalities in quantum information theory},
journal={J. Math. Physics},
volume={53},
date={2012},
}
\bib{jfa1}{article}{
author={Nu\~{n}ez-Alarc\'{o}n, D.},
author={Pellegrino, D.},
author={Seoane-Sep\'{u}lveda, J. B.},
title={On the Bohnenblust-Hille inequality and a variant of Littlewood's 4/3 inequality},
journal={J. Funct. Anal.},
volume={264},
date={2013},
pages={326--336},
}
\bib{nun}{article}{
author={Nu{\~n}ez-Alarc{\'o}n, D.},
author={Pellegrino, D.},
author={Seoane-Sep{\'u}lveda, J. B.},
author={Serrano-Rodr{\'{\i}}guez, D. M.},
title={There exist multilinear Bohnenblust-Hille constants $(C_n)_{n=1}^\infty$ with $\lim_{n\rightarrow\infty}(C_{n+1}-C_n)=0$},
journal={J. Funct. Anal.},
volume={264},
date={2013},
number={2},
pages={429--463},
}
\bib{PP80}{article}{
author={Praciano-Pereira, T.},
title={ On bounded multilinear forms on a class of $l^p$ spaces},
journal={J. Math. Anal. Appl.},
volume={81},
date={1981},
number={2},
pages={561--568},
}
\bib{sawa}{article}{
author={Sawa, J.},
title={The best constant in the Khintchine inequality for complex
Steinhaus variables, the case $p=1$},
journal={Studia Math.},
volume={81},
date={1985},
number={1},
pages={107--126},
}
\end{biblist}
\end{bibdiv}

\end{document}